\numberwithin{equation}{section}
\newcommand{\cD}{{\mathcal D}}
\newcommand{\cE}{{\mathcal E}}
\newcommand{\cF}{{\mathcal F}}
\newcommand{\cO}{{\mathcal O}}
\newcommand{\cQ}{{\mathcal Q}}
\newcommand{\cS}{{\mathcal S}}
\newcommand{\PP}{{\mathbb P}}
\newcommand{\Ker}{\mathrm{Ker}}
\newcommand{\Image}{\mathrm{Im}\,}
\newcommand{\Hom}{\mathrm{Hom}}
\newcommand{\rank}{\mathrm{rk}\,}
\newcommand{\Pic}{\mathrm{Pic}}
\newcommand{\Kc}{K_{C}}
\newcommand{\Oc}{{{\cO}_C}}
\newcommand{\Quot}{\mathrm{Quot}}
\newcommand{\mev}{{M_{E , V}}}
\newcommand{\mevo}{{M_{E_1 , V_1}}}
\newcommand{\mevd}{{M^\vee_{E, V}}}
\newcommand{\mevod}{{M^\vee_{E_1, V_1}}}
\newcommand{\Ce}{C_e}
\newcommand{\ope}{\cO_{\PP E^\vee}}
\newcommand{\opeo}{\ope (1)}
\newcommand{\Vef}{V^{e-f}_e}
\newcommand{\Veo}{V^{e-1}_e}
\newcommand{\Qef}{Q^{e-f}_e}
\newcommand{\Qeo}{Q^{e-1}_e}
\newcommand{\alps}{{\alpha_\mathrm{S}}}
\newcommand{\alpl}{{\alpha_\mathrm{L}}}
\newtheorem{theorem}{{\textbf Theorem}}[section]
\newtheorem{proposition}[theorem]{{\textbf Proposition}}
\newtheorem{lemma}[theorem]{{\textbf Lemma}}
\newtheorem{exa}[theorem]{{\textbf Example}}
\newtheorem{conjecture}{{\textbf Conjecture}}
\newtheorem{remit}[theorem]{{\textbf Remark}}
\newtheorem{defn}[theorem]{{\textbf Definition}}
\newenvironment{remark}{\begin{remit}\rm}{\end{remit}}
\newenvironment{definition}{\begin{defn}\rm}{\end{defn}}
\newenvironment{example}{\begin{exa}\rm}{\end{exa}}
\newcounter{tmp}
\title{On $\alpha$-stability and linear stability of generated coherent systems}
\author{Abel Castorena}
\address{Centro de Ciencias Matem\'aticas -- UNAM Campus Morelia, Antigua Carretera a P\'atzcuaro \# 8701, Col.\ Ex Hacienda San Jos\'e de la Huerta, Morelia, Michoac\'an, Mexico C.\ P.\ 58089.}
\email{abel@matmor.unam.mx}
\author{George H.\ Hitching}
\address{Oslo Metropolitan University, Postboks 4, St. Olavs plass, 0130 Oslo, Norway.}
\email{gehahi@oslomet.no}
\subjclass[2010]{14H60; 14D20}
\keywords{$\alpha$-stability, linear stability, generated coherent system, curve, Butler conjecture}
\begin{document}

\dedicatory{In memory of Professor Peter E.\ Newstead}

\begin{abstract}
There is a well studied notion of GIT-stability for coherent systems over curves, which depends on a real parameter $\alpha$. For generated coherent systems, there is a further notion of stability derived from Mumford's definition of linear stability for varieties in projective space. Let $\alps$ be close to zero and $\alpl \gg 0$. We show that a generated coherent system which is $\alps$-stable and linearly stable is $\alpl$-stable, and give examples showing that without further assumptions, there are no other implications between these three types of stability. We observe that several of the systems constructed have stable dual span bundle, including systems which are not $\alpha$-semistable for any value of $\alpha$. We use this to prove a case of Butler's conjecture for systems of type $(2, d, 5)$.
\end{abstract}

\maketitle

\section{Introduction}

Let $C$ be a complex projective smooth curve. A \textsl{coherent system of type $(r, d, n)$ over $C$} is a pair $(E, V)$ where $E$ is a vector bundle of rank $r$ and degree $d$, and $V \subseteq H^0 (C, E)$ is a subspace of dimension $n$. In this note we shall investigate the relation between different notions of stability of such systems, and apply our observations to the study of Butler's well known conjecture \cite{But97}. To set the scene, let us recall the notions of $\alpha$-stability and linear stability.

\subsection{\texorpdfstring{$\alpha$}{alpha}-stability of coherent systems}

Let $(E, V)$ be a coherent system over $C$. For a nonnegative real number $\alpha$, the \textsl{$\alpha$-slope} of $(E, V)$ is defined as
\[
\mu_\alpha (E, V) \ := \ \frac{\deg E + \alpha \cdot \dim V}{\rank E} .
\]
(Notice that for $\alpha = 0$, the $\alpha$-slope is simply the slope $\mu(E) = \frac{\deg E}{\rank E}$ of the vector bundle $E$.) Such an $(E, V)$ is \textsl{$\alpha$-semistable} if for all proper coherent subsystems $(F, W)$ we have $\mu_\alpha (F, W) \le \mu_\alpha (E, V)$, and \textsl{$\alpha$-stable} if inequality is strict for all $(F, W)$.

By \cite{KN}, for $\alpha > 0$ there is a moduli space $G(r, d, n; \alpha)$ parametrizing $\alpha$-semistable coherent systems of type $(r, d, n)$ over $C$. Coherent systems and their moduli are the subject of much interest. See \cite{New} for a recent survey of results and open questions.

As discussed in \cite[{\S} 2]{BGMN}, the positive real line is divided into a finite number of intervals separated by \textsl{critical values} $\alpha_1 , \ldots , \alpha_l$ with the property that the moduli spaces $G(r, d, n; \alpha)$ are mutually isomorphic for $0 < \alpha < \alpha_1$, for $\alpha_i < \alpha < \alpha_{i+1}$ and for $\alpha > \alpha_l$. We shall be interested primarily in the first and last intervals. Therefore, once and for all, we choose $\alps \in (0, \alpha_1)$ and $\alpl > \alpha_l$ and consider $\alps$- and $\alpl$-stability.

\subsection{Linear stability}

Mumford introduced linear stability in \cite[Definition 2.16]{Mum} as a property of a subvariety $X \subset \PP^{n-1}$. When $X$ is the curve $C$, linear stability has a useful alternative formulation in terms of the linear series inducing the morphism $C \to \PP^{n-1}$. This was generalized in \cite{CHL} to a notion of linear stability for coherent systems of higher rank over $C$. In fact we shall use a slightly weaker notion than that in \cite{CHL}, which we now discuss.

Recall that a coherent system $(E, V)$ over $C$ is said to be \textsl{generated} if the evaluation map $V \to E|_x$ is surjective for all $x \in C$. Let $(E, V)$ be a generated coherent system of type $(r, d, n)$ over $C$ such that $d > 0$; equivalently, $n > r$. For convenience, we introduce the following function, which might be called ``linear slope''.

\begin{definition} \label{LinearSlope}
For any generated coherent system $(F, W)$ with $\dim W > \rank F$, we set
\begin{equation} \label{lambdaDefn}
\lambda (F, W) \ := \ \frac{\deg F}{\dim W - \rank F} .
\end{equation}
Throughout, we abbreviate $\lambda (F, H^0 (C, F))$ to $\lambda (F)$.
\end{definition}

\noindent The following is a slight modification of \cite[Definition 5.1]{CHL}; the reason for this change is discussed in Remark \ref{WhyWeaker}.

\begin{definition} \label{DefnLinSt}
Let $(E, V)$ be a generated coherent system of type $(r, d, n)$ with $d > 0$. Then $(E, V)$ is said to be \textsl{linearly semistable} if for all generated subsystems $(F, W)$ such with $\deg F > 0$ and such that $E/F$ is not a trivial bundle, we have
\begin{equation} \label{LinStIneq}
\lambda (F, W) \ \ge \ \lambda (E, V) .
\end{equation}
(Note that $F$ may be nonsaturated and/or of full rank.) We say that $(E, V)$ is \textsl{linearly stable} if inequality is strict for all such $(F, W)$.

Clearly it is sufficient to require the inequality (\ref{LinStIneq}) for \textsl{complete} generated subsystems with the stated properties; that is, subsystems of the form $\left( F, V \cap H^0 (C, F) \right)$. 
\end{definition}

A remarkable feature of linear stability is \cite[Theorem 4.12]{Mum}, which states that a linearly stable curve is Chow stable. This has found application in \cite{BT} and elsewhere. Mumford proved a partial analogue for higher dimensional varieties in \cite[Proposition 2.17]{Mum}, but the implications for higher dimensional varieties are far from fully explored. The connection with Definition \ref{DefnLinSt} is as follows: For $(E, V)$ as above, $V$ is naturally identified with a subspace of $H^0 (\PP E^\vee, \opeo)$. It can be shown that Definition \ref{DefnLinSt} is equivalent to linear stability in the sense of \cite{Mum} of the image of $\PP E^\vee \to \PP V^\vee$. A proof of this, together with an exploration of other geometric implications of Definition \ref{DefnLinSt}, will be given in the forthcoming \cite{CH}.

\subsection{Butler's conjecture}

Another significant motivation for studying linear stability in the above sense is the aforementioned conjecture of Butler \cite{But97}, which we now recall. If $(E, V)$ is a generated coherent system, the evaluation sequence
\[
0 \ \to \ \mev \ \to \ \Oc \otimes V \ \to \ E \ \to \ 0
\]
is an exact sequence of vector bundles. The bundle $\mevd$ is called the \textsl{dual span bundle} (hereafter ``DSB'') of $(E, V)$. Following \cite{BMNO}, for $\alpha$ close to $0$, we set
\[
S_0 (r, d, n) \ := \ \left\{ (E, V) \in G(r, d, n; \alpha) : (E, V) \hbox{ generated} \right\} .
\]
Butler's conjecture \cite[Conjecture 2]{But97} can be formulated as follows.

\begin{conjecture} \label{ButlerConj}
Suppose that $C$ is a general curve and $(E, V)$ a general element of any component of $S_0 (r, d, n)$. Then the coherent system $\left( \mevd , V^\vee \right)$ is $\alpha$-stable for $\alpha$ close to $0$, and the map $(E, V) \mapsto \left( \mevd , V^\vee \right)$ is a birational equivalence $S_0 (r, d, n) \dashrightarrow S_0 (n-r, d, n)$.
\end{conjecture}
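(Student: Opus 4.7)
The plan is to leverage the duality between a generated coherent system and its dual span system, reducing $\alpha$-stability of the DSB system to the linear stability of the original system, and then proving that linear stability holds generically on each component of $S_0(r, d, n)$.

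First I would verify that dualizing the evaluation sequence
\begin{equation*}
0 \ \to \ \mev \ \to \ \Oc \otimes V \ \to \ E \ \to \ 0
\end{equation*}
yields
\begin{equation*}
0 \ \to \ E^\vee \ \to \ \Oc \otimes V^\vee \ \to \ \mevd \ \to \ 0 ,
\end{equation*}
so that $(\mevd, V^\vee)$ is a generated coherent system of type $(n-r, d, n)$, provided $H^0 (C, E^\vee) = 0$ (which is automatic when $\deg E > 0$ and $E$ is $\alps$-semistable, hence slope-semistable). This furnishes a well-defined set-theoretic map $S_0 (r, d, n) \dashrightarrow S_0 (n-r, d, n)$. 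A direct computation of the ``double DSB'' of $(\mevd , V^\vee)$ recovers $(E, V)$ canonically, so the map is an involution at the level of systems whose DSB satisfies the needed vanishing. This alone will give the birational equivalence claim once the stability claim is secured, provided the stable locus is dense in both source and target.

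The key analytic step is $\alpha$-stability of $(\mevd, V^\vee)$ for small $\alpha$. Coherent subsystems $(G, U) \subsetneq (\mevd, V^\vee)$ can be translated, by dualizing, into data governed by generated subsystems of $(E, V)$: a generated subsystem $(F, W) \subseteq (E, V)$ with $\deg F > 0$ produces, via the quotient $V \twoheadrightarrow V/W$ and the evaluation sequence of $(F, W)$, a proper subsystem of $(\mevd, V^\vee)$ whose $\alpha$-slope comparison for small $\alpha$ amounts to the inequality
\begin{equation*}
\lambda (F, W) \ \ge \ \lambda (E, V)
\end{equation*}
of Definition \ref{DefnLinSt}. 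Thus linear stability of $(E, V)$ should imply $\alps$-stability of $(\mevd, V^\vee)$. Combined with the observation in the abstract that $\alps$-stability plus linear stability imply $\alpl$-stability, this would in fact give full $\alpha$-stability along the entire $\alpha$-line for the DSB system.

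The remaining, and by far hardest, step is to show that a general $(E, V)$ in any irreducible component of $S_0 (r, d, n)$ is linearly stable. This is where one must enter the deformation theory of coherent systems: one would need to identify the components of $S_0 (r, d, n)$, estimate the dimension of the locus of $(E, V)$ admitting a destabilizing generated subsystem $(F, W)$ of each admissible type $(r', d', n')$, and verify that this locus is a proper closed subvariety. On a general curve, a Brill--Noether style argument should control the dimensions of the strata carrying $(F, W)$ of fixed invariants, but doing this uniformly in $(r, d, n)$ seems out of reach with current techniques. This is the main obstacle, and it is plausibly why the authors restrict to the concrete case $(r, d, n) = (2, d, 5)$, where the list of candidate destabilizing types is short enough to be analyzed by hand.
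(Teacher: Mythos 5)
The statement you are attempting is Conjecture \ref{ButlerConj}, which the paper does not prove: it is Butler's open conjecture, and the paper establishes only the special case of type $(2, d, 5)$ for $d \in \{ 2 d_2 (C) - 1, 2 d_2 (C) \}$ (Theorem \ref{MainD}), by quite different and much more restrictive means (the Butler diagram, the gonality sequence, and the Bakker--Farkas theorem on $\gamma_2'$). So there is no proof in the paper to match your proposal against, and a complete argument along your lines would be a major new result rather than a reconstruction.

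More importantly, your central reduction is false. You claim that linear stability of $(E, V)$ implies $\alps$-stability of $(\mevd, V^\vee)$, on the grounds that subsystems of $(\mevd, V^\vee)$ are governed by generated subsystems of $(E, V)$. The implication recorded in the paper, (\ref{LinStNec}), goes only in the opposite direction: slope (semi)stability of $\mev$ implies linear (semi)stability of $(E, V)$, because each generated subsystem $(F, W)$ produces a subbundle $M_{W, F} \subset \mev$ of slope $-\lambda (F, W)$. But not every subsheaf $S \subset \mev$ arises in this way: in the Butler diagram of Proposition \ref{2d5}, a subbundle $S$ gives rise to a quotient $F_S$ of $\Oc \otimes W$ with a kernel piece $N$, and one recovers only the subsheaf $E_W \cong F_S / N$ of $E$, whose invariants do not determine $\mu(S)$. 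The paper explicitly cites examples (\cite[{\S} 8]{MS}, \cite[{\S} 4]{CT}, \cite[{\S\S} 4 \& 5]{CHL}) of linearly stable systems whose DSB is \emph{not} semistable, so this step cannot be repaired without further hypotheses --- precisely the kind of hypotheses (on $d$ relative to $d_1 (C)$, via Clifford indices) that make Proposition \ref{2d5} work. Finally, you yourself concede that the remaining step, generic linear stability on every component of $S_0 (r, d, n)$, is out of reach; since that is where essentially all the content of the conjecture would lie, the proposal does not constitute a proof.
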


\begin{definition} \label{ButlerNontriv}
Following \cite{BMNO}, we shall say that \textsl{Butler's conjecture holds nontrivially for type $(r, d, n)$ over $C$} if $S_0 (r, d, n)$ is nonempty and Conjecture \ref{ButlerConj} holds.
\end{definition}

Butler's conjecture has been proven in many cases; see in particular \cite{BBN} for type $(1, d, n)$ and \cite{BMNO} for higher rank. The relevance of linear stability is the following; cf.\ \cite[Lemma 5.4]{CHL}. Suppose that $(F, W)$ is a generated subsystem of $(E, V)$ with $E/F$ nontrivial. Then there is a diagram
\[ \xymatrix{
0 \ar[r] & \mev \ar[r] & \Oc \otimes V \ar[r] & E \ar[r] & 0 \\
0 \ar[r] & M_{W, F} \ar[r] \ar[u] & \Oc \otimes W \ar[r] \ar[u] & F \ar[r] \ar[u] & 0 .
} \]
We deduce in particular that $\mu \left( M_{F, W} \right) = \frac{-\deg F}{\dim W - \rank F} = -\lambda ( F, W )$. Therefore,
\begin{multline} \label{LinStNec}
 \hbox{$\mev$ is a slope (semi)stable vector bundle only if} \\
 \hbox{$(E, V)$ is linearly (semi)stable in the sense of Definition \ref{DefnLinSt}.}
\end{multline}
The converse does not always hold: Examples of linearly stable systems with unstable DSB are given in \cite[{\S} 8]{MS}, \cite[{\S} 4]{CT} and \cite[{\S\S} 4 \& 5]{CHL}. There are, however, many interesting situations in which linear stability does imply slope stability of $\mev$. See in particular \cite{MS} for the case $r = 1$, and \cite{CHL} for further discussion.

\begin{remark} \label{SpecialCurve}
We believe that linear stability may shed light on another aspect of Butler's conjecture: In light of the coherent systems over rational, elliptic and bielliptic curves studied in \cite[{\S} 5]{CHL}, it may be reasonable to expect that linear stability is feasible to check for certain special curves. Thus linear stability, together with sufficient conditions for stability of the DSB such as \cite[Proposition 5.10]{CHL}, may be suited to checking the validity of the conjecture over special curves, and making precise the meaning of ``general'' in the statement.
\end{remark}

\begin{remark} \label{WhyWeaker}
Definition \ref{DefnLinSt} is slightly weaker than \cite[Definition 5.1]{CHL} in that one does not require the linear slope inequality for subsystems $(F, W)$ such that $E/F$ is trivial. This is because if $E/F \cong \Oc^{\oplus r'}$, then using generatedness of $E$ one can check that $E \cong F \oplus \Oc^{\oplus r'}$ and that $M_{F, V \cap H^0 (F)} \cong \mev$. In particular, $M_{F, V \cap H^0 (F)}$ does not slope destabilize $\mev$. In view of (\ref{LinStNec}), we would thus prefer that the subsystem $(F, W)$ should not be regarded as linearly destabilizing $(E, V)$. Therefore, we require the linear slope inequality only for $(F, W)$ where $E/F$ is not trivial.
\end{remark}

Our primary ambition in the present work is to ascertain which implications hold between $\alps$-, $\alpl$- and linear stability for generated coherent systems, and which do not hold. The motivation is to improve understanding of the geometric and moduli-theoretic implications of linear stability, and to give a more complete foundation for approaching Butler's conjecture via linear stability. Although $\alpl$-stability is a priori not an ingredient in Butler's conjecture in the way that $\alps$- and linear stability are, all implications between these conditions are potentially of use in studying the conjecture; and indeed $\alpl$-stability is central in the special case studied in Proposition \ref{MainC} below. Our conclusion, however, is that without further assumptions, the relation between the various stabilities is almost as unpredictable as possible. Our main results are the following.

\begingroup
\setcounter{tmp}{\value{theorem}}
\setcounter{theorem}{0}
\renewcommand\thetheorem{\Alph{theorem}}

\begin{proposition}[Proposition \ref{OneImpl}] \label{MainA}
If a generated coherent system $(E, V)$ is linearly stable and $\alps$-stable, then it is $\alpl$-stable.
\end{proposition}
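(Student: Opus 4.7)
My plan is to prove the stronger statement that every proper subsystem $(F, W)$ of $(E, V)$, with $r' = \rank F$ and $n' = \dim W$, satisfies the strict inequality $n'/r' < n/r$. This suffices for $\alpl$-stability: the sign of $\mu_\alpha(E, V) - \mu_\alpha(F, W)$ for large $\alpha$ is governed by $n/r - n'/r'$, so such a strict inequality yields $\mu_\alpha(F, W) < \mu_\alpha(E, V)$ for all $\alpha \gg 0$.

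The key construction is to replace $(F, W)$, which need not be generated, by the generated subsystem it contains. Set $E' := \Image(W \otimes \cO_C \to E)$, so that $E' \subseteq F$ and $(E', W)$ is a generated subsystem of $(E, V)$. It is a proper subsystem: otherwise $E' = E$ and $W = V$, which together with $E' \subseteq F \subseteq E$ would force $(F, W) = (E, V)$, contrary to assumption. Write $r'' := \rank E' \le r'$ and $d'' := \deg E'$.

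If $d'' > 0$, linear stability of $(E, V)$ applied to $(E', W)$ yields $d''(n - r) > d(n' - r'')$, while $\alps$-stability applied to $(E', W)$ yields $d'' \le d r''/r$. Eliminating $d''$ between these two inequalities gives $r''(n - r) > r(n' - r'')$, that is, $n'/r'' < n/r$; since $r'' \le r'$, this implies $n'/r' \le n'/r'' < n/r$, as required.

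If instead $d'' = 0$, the globally generated bundle $E'$ of degree zero on a smooth projective curve is necessarily trivial, so $W \subseteq H^0(E') \cong \K^{r''}$, and generatedness forces $n' = r''$, giving $n'/r' \le 1 < n/r$ (using $n > r$, which follows from $d > 0$ and generatedness of $(E, V)$). The case $W = 0$ is immediate from the same inequality $n > r$. The only real subtlety is this $d'' = 0$ branch, which lies outside the scope of linear stability; I would handle it by invoking the standard classification of globally generated degree-zero vector bundles on a curve as being trivial.
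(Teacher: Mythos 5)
Your proof is correct and follows essentially the same route as the paper's: you pass to the subsheaf generated by $W$ (the paper's $F_W$, your $E'$), combine linear stability with the semistability of $E$ coming from $\alps$-stability to get $\dim W/\rank F_W < n/r$, and treat the degenerate case $\deg F_W = 0$ separately. The only cosmetic differences are that you phrase the key step by cross-multiplying rather than chaining fraction inequalities, and you settle the degree-zero case via triviality of a generated degree-zero bundle instead of the paper's ``increase $\alpl$'' bound.
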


\begin{theorem} \label{MainB}
There exist generated coherent systems exhibiting all combinations of $\alps$-, $\alpl$- and linear stability which are not ruled out by Proposition \ref{MainA}.
\end{theorem}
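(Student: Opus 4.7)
The plan is to construct, for each of the seven triples of Yes/No values of (linear stability, $\alps$-stability, $\alpl$-stability) other than $(Y,Y,N)$, an explicit generated coherent system realizing that triple. A natural approach is to build a library of examples over concrete curves---rational, elliptic, bielliptic, or general curves of moderate genus---where all three stability conditions can be checked by hand, then tune degrees, ranks, and section counts to hit each combination. Several of these boxes are already populated (or nearly so) by constructions in the literature \cite{BMNO, MS, CT, CHL}, and the remaining ones should be reachable by adapting those constructions.

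To populate each cell I would proceed as follows. For the case $(Y,Y,Y)$, take a generated system whose dual span bundle is slope-stable, as produced in the known cases of Butler's conjecture; linear stability then follows from \eqref{LinStNec}, while $\alps$-stability is a standard consequence of genericity in $S_0(r,d,n)$, after which Proposition \ref{MainA} gives $\alpl$-stability. For the four combinations of the form $(N,*,*)$ I would exhibit systems admitting a generated subsystem $(F,W)$ with $\lambda(F,W) < \lambda(E,V)$, and control $\alps$- and $\alpl$-stability independently by varying $\deg F$, $\rk F$ and $\dim W$; direct sums of linearly unbalanced systems, and nontrivial extensions of one coherent system by another, give natural families of candidates. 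For the two $(Y,N,*)$ combinations I would start from a linearly stable coherent system whose underlying bundle $E$ is not slope-stable, choosing the $\mu$-destabilizing subbundle of $E$ so that it inherits few sections from $V$; the subsystem that destroys $\alps$-stability then fails to violate the linear slope inequality, while the failure or success of $\alpl$-stability can be arranged by tuning $\dim V$ relative to $\rk E$.

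Verification in each case reduces to enumerating the potentially destabilizing subsystems for $\alps$- and $\alpl$-stability, together with the complete generated subsystems relevant to linear stability, and checking three numerical inequalities. The main obstacle will be balancing these competing constraints simultaneously: in particular, ensuring that a subsystem destabilizing for $\alps$ or $\alpl$ is not simultaneously a generated subsystem witnessing failure of linear stability (or conversely). Low-genus curves give enough explicit control over the cohomology of small-rank bundles to make these checks tractable, while still providing enough parametric flexibility to realize each configuration. I therefore expect the proof to take the form of seven (possibly grouped) explicit examples, each verified by the recipe above.
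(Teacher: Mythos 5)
There is a genuine gap: what you have written is a research plan, not a proof. Theorem \ref{MainB} is an existence statement whose entire mathematical content consists of seven concrete constructions together with the verification, in each case, of three stability conditions; your proposal defers every one of these constructions (``I would proceed as follows'', ``should be reachable by adapting those constructions'') and explicitly flags the central difficulty --- arranging that a subsystem which destabilizes for one notion does not simultaneously witness failure of another --- without resolving it in a single case. The general strategy you describe (direct sums for the easy negative case, nontrivial extensions with prescribed section-lifting, and deducing linear stability from slope stability of the DSB via (\ref{LinStNec})) does match the paper's approach in outline, but the outline is not where the work lies.

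Two specific points illustrate why the missing details are not routine. First, the combination ``not $\alps$-semistable, $\alpl$-stable, not linearly semistable'' (Example \ref{NYN}) resists the extension technique entirely: one needs a generated system of type $(2,2g+1,5)$ whose underlying bundle is non-semistable yet which contains no subsystem of type $(1,e,3)$, and the paper obtains this only via an elementary transformation of $L\oplus M$ inside a generalized secant locus in a Quot scheme, resting on Proposition \ref{PlaneCurveSing} (existence of $D$ with $h^0(L(D))=3$ and $L(D)$ base point free, proved with ampleness and dimension estimates for secant varieties). Second, for the linearly stable cases with $n=5$ the paper does not verify linear stability by enumerating subsystems at all; it proves stability of the DSB via Proposition \ref{2d5}, which depends on the Bakker--Farkas theorem (Theorem \ref{MercatRankTwo}) that $\gamma_2'(C)=d_1(C)-2$ on a general curve, together with gonality-sequence bounds forcing $d<3d_1(C)$. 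Your suggestion to work over rational, elliptic or bielliptic curves runs against this: all of the paper's examples require curves general in moduli of genus at least $4$, $6$, $12$, $18$ or $25$ precisely so that the Brill--Noether loci $W^k_\ell$, the gonality sequence, and the higher Clifford index behave generically. Until at least the hard cells of the table are actually filled in with verified examples, the theorem is not proved.
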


Theorem \ref{MainB} is proven by constructing examples with each possible combination of stabilities, using extensions and elementary transformations. In fact, all of these exist already for $r = 2$. Here is an overview:

\begin{equation} \label{Overview}
\begin{tabular}{|c|c|c|c|}
\hline
$\alps$-stable & $\alpl$-stable & linearly stable & \\
\hline
 N & N & N & Example \ref{NNN} \\
 Y & Y & N & Example \ref{YYN} \\
 Y & N & N & Example \ref{YNN} \\
 N & Y & N & Example \ref{NYN} \\
 N & N & Y & Example \ref{NNY} \\
 N & Y & Y & Example \ref{NYY} \\
 Y & Y & Y & Example \ref{YYY} \\
 Y & N & Y & Impossible by Proposition \ref{MainA} \\
\hline
\end{tabular}
\end{equation}

To construct these examples, we rely on the following intuition. In a generated coherent system $(E, V)$ with a subsystem $(F, W)$, very roughly,
\begin{itemize}
\item for $\alps$-stability, $\frac{\deg F}{\rank F}$ should be bounded above;
\item for $\alpl$-stability, $\frac{\dim W}{\rank F}$ should be bounded above; and
\item for linear stability, $\frac{\deg F}{\dim W}$ should be bounded below if $(F, W)$ is generated.
\end{itemize}
See also Remark \ref{DifferentDestab}.

Theorem \ref{MainB} shows the combinations of stabilities which can arise without further assumptions. In special cases, however, further implications may hold between the various stabilities and also stability of the DSB:

\begin{proposition}[Proposition \ref{2d5}] \label{MainC}
Let $C$ be a general curve and $(E, V)$ a generated coherent system over of type $(2, d, 5)$ over $C$ where $d < 3 \cdot d_1 (C) = 3 \cdot \left\lceil \frac{g}{2} + 1 \right\rceil$. Then if $(E, V)$ is $\alpl$-stable, $\mev$ is a stable bundle. In particular, $(E, V)$ is linearly stable.
\end{proposition}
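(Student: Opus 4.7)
The plan is to show that $M := \mev$ is slope stable, whence linear stability of $(E, V)$ follows from the necessity \eqref{LinStNec}. I argue by contradiction. If $M$ is not stable, dualising gives a proper quotient $M^\vee \twoheadrightarrow G$ with $s := \rank G \in \{1, 2\}$ and $\deg G \le sd/3$; and since $V^\vee \otimes \Oc \twoheadrightarrow M^\vee$ is surjective, $G$ is generated by $U := \Image(V^\vee \to H^0(G))$. Also $H^0(M) = 0$, because the evaluation sequence for $(E, V)$ yields $H^0(M) = \ker(V \to H^0(E))$, which vanishes as $V \hookrightarrow H^0(E)$.

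If $s = 1$, then $G$ is a generated line bundle of degree $\le d/3 < d_1(C)$. Either $\dim U \ge 2$, in which case $h^0(G) \ge 2$ and hence $\deg G \ge d_1(C)$, a contradiction; or $\dim U = 1$, in which case $G$ is generated by a single nowhere vanishing section, forcing $G \cong \Oc$. In this latter case $\Oc \hookrightarrow M$, giving $0 \neq H^0(\Oc) \hookrightarrow H^0(M) = 0$, again a contradiction.

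If $s = 2$, dualise to get a rank $2$ subbundle $G^\vee \subseteq M \subseteq V \otimes \Oc$, let $W \subseteq V$ be the fibrewise linear span of $G^\vee$, and set $F := \Image(W \otimes \Oc \to E)$, so that $(F, W)$ is a generated subsystem of $(E, V)$ with $G^\vee \subseteq M_{W, F}$ and $\dim W \ge \rank F + 2$. Suppose first $W \subsetneq V$. As $\alpl$-stability of $(E, V)$ forces $\dim W / \rank F < 5/2$, we get $\dim W \le 2$ when $\rank F = 1$, incompatible with $\dim W \ge 3$. If $\rank F = 2$ and $F = E$, then $\dim W = 4$, and the inclusion $G^\vee \subseteq M_{W, E}$ between rank $2$ bundles gives $\deg G \ge -\deg M_{W, E} = d > 2d/3$, a contradiction. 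If $\rank F = 2$ and $F = E(-D) \subsetneq E$ for an effective $D > 0$, then again $\dim W = 4$; but since $V \to E|_x$ is surjective at every $x \in \Supp(D)$, the map $V \to E|_D$ has image of dimension at least $2$, so $\dim(V \cap H^0(E(-D))) \le 3 < 4$, a contradiction.

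The remaining case $W = V$ is the main obstacle. Here $W = V$ is equivalent to the injectivity of $V^\vee \to H^0(G)$, so $\dim U = 5$ and $(G, U)$ is a generated coherent system of type $(2, \deg G, 5)$ with $\deg G \le 2d/3 < 2 d_1(C)$. Any line-bundle quotient of $G$ is also a generated line-bundle quotient of $M^\vee$, so by the $s = 1$ argument has degree $\ge d_1(C)$; the Segre invariant therefore satisfies $s_1(G) \ge 2 d_1(C) - \deg G > 0$, so $G$ is slope stable. It then remains to invoke Brill--Noether theory for rank-$2$ bundles on a general curve, which gives a lower bound of $(6g+18)/5$ on the degree of a stable rank-$2$ bundle with $h^0 \ge 5$; a short calculation shows $(6g+18)/5 > g + 3 \ge 2 d_1(C)$ for every $g \ge 0$, producing the final contradiction. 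This is the step where the genericity of $C$ is decisive.
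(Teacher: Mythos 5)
Your overall strategy is the dual of the paper's: the paper takes a maximal-slope subbundle $S\subset\mev$ and runs the Butler diagram, splitting on $\rank S$ and on $\dim W$ where $W^\vee=\Image(V^\vee\to H^0(S^\vee))$; you take the quotient $G=S^\vee$ of $\mevd$ and split on $\rank G$ and the span $W$. Your rank-one case and the sub-cases ``$\rank F=1$'' (a net, excluded by $\alpl$-stability as in Lemma \ref{SpecialImpl}) and ``$F=E$'' are fine. However, there are two genuine gaps. First, in the sub-case $\rank F=2$, $F\subsetneq E$, $\dim W=4$, you assume $F=E(-D)$ for an effective divisor $D$. A full-rank proper subsheaf of $E$ is in general $\ker(E\to\tau)$ for an arbitrary torsion quotient $\tau$, and when $\tau\otimes k(x)$ is one-dimensional (e.g.\ $\tau=k(x)$, a single elementary transformation) your bound fails: $\dim\ker\left(V\to H^0(\tau)\right)=4$, so there is no contradiction with $\dim W=4$. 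This sub-case really occurs and cannot be dismissed by pointwise surjectivity of evaluation; what one gets from $G^\vee\subseteq M_{W,F}$ (equal ranks) is only $\deg G\ge\deg F$, and to conclude one then needs a lower bound on the degree of a generated rank-two sheaf with at least four sections --- i.e.\ exactly the Clifford-index input $\gamma_2'(C)=d_1(C)-2$ of Theorem \ref{MercatRankTwo}, which is how the paper treats every rank-two $S$ with $\dim W\ge4$ at one stroke.

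Second, your final step invokes ``Brill--Noether theory for rank-$2$ bundles on a general curve'' to assert that a stable rank-two bundle with $h^0\ge5$ has degree at least $(6g+18)/5$. That number is the expected-dimension threshold $\beta(2,d,5)\ge0$, but \emph{emptiness} of higher-rank Brill--Noether loci below this threshold is not a theorem one can cite --- it is essentially the content of open conjectures in higher-rank Brill--Noether theory, and the known substitute on a general curve is precisely the Bakker--Farkas result $\gamma_2'(C)=\gamma_1'(C)=d_1(C)-2$ (Theorem \ref{MercatRankTwo}). Fortunately that weaker bound suffices: for your (semi)stable $G$ with $h^0(G)\ge5$ and $\mu(G)\le d/3<d_1(C)\le g-1$, it gives $\deg G\ge2\bigl(h^0(G)-2\bigr)+2\gamma_2'(C)\ge2d_1(C)+2>2d/3$, the desired contradiction. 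So the proof can be repaired, but both the $\dim W=4$ and the $\dim W=5$ branches must route through Theorem \ref{MercatRankTwo} rather than through the divisor argument and the unproved Brill--Noether emptiness statement.
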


\noindent (For $k \ge 1$, the number $d_k (C)$ is the $k$th gonality of $C$; see {\S} \ref{GonalitySeq} for definitions.) In addition to proving linear stability in Examples \ref{NYY} and \ref{YYY}, Proposition \ref{MainC} allows us to prove a case of Butler's conjecture:

\begin{theorem} \label{MainD}
Let $C$ be a curve of genus $g \ge 18$ which is general in moduli. 
 Then Butler's conjecture holds nontrivially for type $(2, d, 5)$ in the following cases.
\begin{enumerate}
\renewcommand{\labelenumi}{(\alph{enumi})}
\item $d = 2 d_2 (C) - 1$ and $g \equiv 2 \mod 3$
\item $d = 2 d_2 (C)$
\end{enumerate}
\end{theorem}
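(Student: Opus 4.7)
The plan is to reduce Butler's conjecture for type $(2, d, 5)$ to Proposition \ref{MainC} by first ensuring nonemptiness of $S_0(2, d, 5)$, then extracting $\alpl$-stability of a general element and pushing it through Proposition \ref{MainC} to get slope stability of the dual span bundle; a dimension count then finishes the birational equivalence. Before anything else I would verify that the bound $d < 3 d_1(C) = 3 \lceil g/2 + 1 \rceil$ is satisfied for both values of $d$: on a Brill--Noether general curve of genus $g \ge 18$ we have $d_2 (C) = \lceil (2g+6)/3 \rceil$, so $d \le 2 d_2 (C)$ is comfortably less than $3 d_1(C) \approx 3g/2$, placing us within the hypotheses of Proposition \ref{MainC}.

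Next I would exhibit explicit generated coherent systems of type $(2, d, 5)$ in each case, using the constructions implicit in Examples \ref{NYY} and \ref{YYY}: for $d = 2 d_2(C)$, take a suitable extension or small deformation of a direct sum of two general line bundles computing $d_2(C)$, together with a five-dimensional generating subspace assembled from their $g^1_{d_2(C)}$'s and an additional section supplied by the Brill--Noether inequality $\rho (g, 2, d_2(C)) \ge 0$; for $d = 2 d_2(C) - 1$ with $g \equiv 2 \bmod 3$, apply an elementary transformation of the previous construction at a general point. One then checks, by enumerating potentially destabilizing subsystems $(F, W)$ of rank one and bounded degree (using $d_1(C)$ and $d_2(C)$), that each such system is simultaneously $\alps$-stable and $\alpl$-stable. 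This establishes $S_0 (2, d, 5) \ne \emptyset$ and, via openness of $\alpl$-stability, that the generic element of each component of $S_0 (2, d, 5)$ is $\alpl$-stable.

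With $\alpl$-stability of a general $(E, V) \in S_0(2, d, 5)$ in hand, Proposition \ref{MainC} yields slope stability of $\mev$ and hence of its dual $\mevd$, which has rank $3$ and degree $d$. Since for $\alpha$ close to $0$ slope stability of the underlying generated bundle is essentially sufficient for $\alpha$-stability of the associated coherent system, the pair $(\mevd, V^\vee)$ lies in $S_0 (3, d, 5)$. The map $(E, V) \mapsto (\mevd, V^\vee)$ is birational by the standard argument: the analogous DSB construction applied to $(\mevd, V^\vee)$ returns $(E, V)$ via the evaluation exact sequence, and the two expected Brill--Noether dimensions for $S_0 (2, d, 5)$ and $S_0 (3, d, 5)$ coincide by the symmetry $r \leftrightarrow n - r$.

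The main obstacle is the construction step, in particular verifying both $\alps$- and $\alpl$-stability of explicit examples in each of the two specified ranges and showing that they populate every irreducible component of $S_0 (2, d, 5)$. The numerical hypotheses $g \ge 18$ and the congruence $g \equiv 2 \bmod 3$ in case (a) are presumably imposed precisely to make these constructions go through and to keep the relevant Brill--Noether loci $W^1_{d_1 (C)}$ and $W^2_{d_2 (C)}$ nonempty and of expected dimension.
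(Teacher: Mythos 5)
Your overall skeleton (nonemptiness via an explicit construction, $\alpl$-stability feeding into Proposition \ref{MainC} to get stability of $\mev$, then birationality) matches the paper's strategy, but there are two genuine gaps where you substitute an assertion for the actual argument.

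First, the step from ``I have constructed one $\alps$- and $\alpl$-stable example'' to ``the generic element of \emph{each component} of $S_0(2,d,5)$ is $\alpl$-stable'' does not follow from openness of $\alpl$-stability: openness only gives you a dense open subset of the component containing your example, and you have no control over the other components. You flag ``showing that they populate every irreducible component'' as an obstacle, but that is not how one should proceed (and it would be hard). The paper avoids this entirely by proving a statement about \emph{arbitrary} $\alps$-stable elements: for any $\alps$-stable $(E,V)$ of type $(2,d,5)$ with $d\in\{2d_2-1,\,2d_2\}$, semistability of $E$ (Lemma \ref{StVBImpliesAlps}(c)) forces $\deg N\le d/2$ for every line subbundle, and a short case analysis (using $\alps>0$ in the borderline case $\deg N=d_2$) shows every rank one subsystem has $\dim W\le 2$; hence $(E,V)$ contains no net and is $\alpl$-stable by Lemma \ref{SpecialImpl}. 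This is exactly where the specific values of $d$ enter, and your proposal never uses them for this purpose.

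Second, and more seriously, your treatment of birationality is not a proof. Injectivity of the dual span map together with equality of \emph{expected} Brill--Noether dimensions does not give dominance onto $S_0(3,d,5)$: components need not have the expected dimension, and even if they did, the image could miss entire components of the target. The paper's proof of dominance is its longest and most delicate part: for a generic $(E_1,V_1)$ in \emph{any} component of $S_0(3,d,5)$ one forms the Butler diagram with respect to a maximal line subbundle $S\subset\mevo$, rules out $\dim W=2$ by comparing $\mu(S^\vee)\ge d_1(C)$ with $\mu(E_1)<g/2+1$, and for $\dim W\ge 3$ gets $\deg S\le -d_2\le\mu(\mevo)$; the equality case (which occurs only for $d=2d_2$) requires showing that $\mevod$ is a \emph{nontrivial} extension of $S^\vee$ by a line bundle $T$ of degree $d_2$ (a trivial extension would make $E_1$ a non-semistable direct sum, contradicting $\alps$-stability of $(E_1,V_1)$), whence $(\mevod,V_1^\vee)$ contains no net and is $\alps$-stable. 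None of this is present in your proposal, and the ``standard argument'' you invoke does not supply it.
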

\endgroup

The paper is organized as follows: After recalling or proving some facts on Brill--Noether loci and secant loci in {\S} \ref{BNDefns}, we prove Proposition \ref{MainA} in {\S} \ref{comparing}. We then give a straightforward construction of a generated rank two extension with many sections, which will furnish most of the examples in the proof of Theorem \ref{MainB}. In {\S} \ref{NonLinStExamples} we construct those examples in (\ref{Overview}) which are not linearly semistable, and thereby in view of (\ref{LinStNec}) cannot have stable DSB; and then proceed to those which are linearly stable in {\S} \ref{LinStExamples}. We prove Theorem \ref{MainD} in the final section.

All the linearly stable systems we construct in {\S} \ref{LinStExamples}, including those which are $\alpha$-nonstable for one or more $\alpha$, turn out to have stable DSB also. It is not surprising that such systems should exist, but it may be useful to have concrete examples.

A recurring feature in the examples is that linear stability and stability of the DSB appear easier to prove for small values of $d$, when the gonality sequence and higher Clifford indices of the curve rule out certain destabilizing subsystems. It is conceivable that other implications between the three types of stability do hold for higher values of $d$. It also remains to be seen what may be possible when a system is strictly semistable with respect to one or more of the definitions.

We also point out that all of the coherent systems we construct are complete and, Remark \ref{SpecialCurve} notwithstanding, live over curves which are general in moduli. Examples of incomplete linearly stable coherent systems (both $\alps$-stable and $\alps$-unstable) are constructed in \cite[{\S\S} 4 \& 5.4]{CHL}, where also the case of bielliptic curves is studied.

We hope that these examples and techniques may be useful in the further study of Butler's conjecture and in the geometric implications of linear stability.

\subsection*{Acknowledgements}

This paper is dedicated to the memory of Professor Peter E.\ Newstead, in fond and deep appreciation for his generosity and wisdom over many years. We also thank Ali Bajravani for enjoyable and useful conversations about linear stability and Clifford indices.

The first author is supported by project IN100723, ``Curvas, Sistemas lineales en superficies proyectivas y fibrados vectoriales'' from DGAPA, UNAM.

\subsubsection*{Notation}

For any sheaf $F$ over $C$ and for each $i \ge 0$, we abbreviate $H^i (C, E)$, $h^i (C, E)$ and $\chi(C, E)$ to $H^i (E)$, $h^i (E)$ and $\chi(E)$ respectively. Moreover, as we shall deal chiefly with complete coherent systems, for any $\alpha \ge 0$ and any sheaf $F$ on $C$ we abbreviate $\mu_\alpha \left( F, H^0 (F) \right)$ to $\mu_\alpha (F)$; and $\lambda (F, H^0 (F))$ to $\lambda (F)$ when $F$ is locally free and generated of positive degree.

\section{Preliminaries}

Let $C$ be a complex projective smooth curve of genus $g \ge 2$. We now recall some notions and results on Brill--Noether theory, Clifford indices and secant loci associated to bundles over $C$ which we shall require.

\subsection{Brill--Noether loci on Picard varieties} \label{BNDefns}

The canonical reference for the following is \cite{ACGH}. For $d \ge 0$ and $k \ge 0$, we consider the Brill--Noether locus 
\[
W_d^k \ = \ \{ L \in \Pic^d (C) : h^0 (L) \ge k + 1 \} ,
\]
which may also be denoted $W_d^k (C)$ or $B_{1, d}^{k+1}$. This is a determinantal variety with expected dimension given by the Brill--Noether number
\[
\beta^{k + 1}_{1, d} \ = \ g - (k + 1) (k - d + g) .
\]
The locus $W_d^k$ is nonempty whenever $\beta^{k + 1}_{1, d}$ is nonnegative. If $C$ is general in moduli, $W_d^k$ is nonempty and of dimension $\beta^{k + 1}_{1, d}$ whenever $0 \le \beta^{k + 1}_{1, d} \le g$.

\subsubsection{The gonality sequence of a curve} \label{GonalitySeq}

A reference for this is \cite[{\S} 4]{LNe}. For a fixed curve $C$ of genus $g$ and for $k \ge 1$ we define
\[
d_k (C) \ := \ \min \left\{ \deg L : L \to C \hbox{ with } h^0 (L) \ge k + 1 \right\} .
\]
A computation using {\S} \ref{BNDefns} shows that
\begin{equation} \label{dkC}
d_k (C) \ \le \ \left\lceil \frac{kg}{k+1} + k \right\rceil ,
\end{equation}
with equality when $C$ is general; for example, Petri.

\subsubsection{Base point freeness of a general element}

We shall use the following statement several times.

\begin{lemma} \label{BasePointFree}
Let $C$ be a general curve of genus $g > 2$ and let $k \ge 1$ and $\ell$ be integers satisfying $\ell \ge d_k (C)$ and $k - \ell + g \ge 0$. Then a general $L \in W^k_\ell (C)$ satisfies $h^0 (L) = k+1$ and is generated.
\end{lemma}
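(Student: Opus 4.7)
The plan is to treat the two conclusions $h^0(L) = k+1$ and base-point-freeness separately, both via dimension counts on a general curve using the Brill--Noether description from {\S} \ref{BNDefns} (so every $W^j_m(C)$ is nonempty of the expected dimension $\beta^{j+1}_{1,m}$ when this number lies in $[0, g]$, and is empty otherwise).

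First I would argue that $W^{k+1}_\ell \subsetneq W^k_\ell$, so that a general element of $W^k_\ell$ has exactly $k+1$ sections. A direct computation gives
\[
\beta^{k+1}_{1, \ell} - \beta^{k+2}_{1, \ell} \ = \ (k - \ell + g) + (k + 2),
\]
which is at least $k + 2$ under the hypothesis $k - \ell + g \ge 0$. Hence either $W^{k+1}_\ell$ is empty, or it has strictly smaller dimension than $W^k_\ell$.

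Next, for generation, consider the ``add-a-point'' morphism
\[
\mu \colon W^k_{\ell - 1}(C) \times C \ \longrightarrow \ \Pic^\ell (C), \qquad (M, p) \ \longmapsto \ M(p).
\]
Tensoring with $\cO_C (p)$ only increases $h^0$, so the image lies in $W^k_\ell(C)$, and a quick check shows that $\mu$ surjects precisely onto the locus of $L \in W^k_\ell$ which admit a base point, by taking $M = L(-p)$ for any base point $p$. If $\ell = d_k(C)$, then by definition of the gonality sequence $W^k_{\ell - 1}$ is empty, so every $L \in W^k_\ell$ is already generated. Otherwise $\ell - 1 \ge d_k(C)$, and
\[
\dim (W^k_{\ell-1} \times C) \ = \ \beta^{k+1}_{1, \ell - 1} + 1 \ = \ \beta^{k+1}_{1, \ell} - k,
\]
strictly smaller than $\dim W^k_\ell = \beta^{k+1}_{1, \ell}$ because $k \ge 1$. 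Thus the non-generated locus is a proper closed subvariety.

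Since both $W^{k+1}_\ell$ and the image of $\mu$ are proper closed subsets of $W^k_\ell$, a general $L \in W^k_\ell$ avoids both, yielding the lemma. I do not anticipate a hard step; the main care-points are the cases in which one or more of the relevant Brill--Noether loci is empty or zero-dimensional. In these boundary situations the negativity of the appropriate Brill--Noether number forces the undesired locus to be empty automatically, so no additional argument is needed.
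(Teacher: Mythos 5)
Your proposal is correct and follows essentially the same route as the paper: the same comparison $\beta^{k+1}_{1,\ell} - \beta^{k+2}_{1,\ell} \ge k+2$ to get $h^0(L) = k+1$ generically, and the same dimension count on the image of $W^k_{\ell-1} \times C \to W^k_\ell$ to exclude base points, with the inequality reducing to $k \ge 1$. Your explicit treatment of the boundary case $\ell = d_k(C)$ (where $W^k_{\ell-1}$ is empty) is a harmless refinement of what the paper subsumes under ``dimension at most $\beta^{k+1}_{1,\ell-1}+1$''.
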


\begin{proof}
Using {\S} \ref{BNDefns} and the hypothesis that $k - \ell + g \ge 0$, we have
\begin{multline}
\beta^{k+1}_{1, \ell} - \beta^{k+2}_{1, \ell} \ = \ g - (k + 1)(k + 1 - \ell + g - 1) - g + (k + 2)(k + 2 - \ell + g - 1) \\
 = \ (k + 2)(k + 1 - \ell + g) - (k+1)(k + 1 - \ell + g) + (k + 1) \\
 = \ (k + 1 - \ell + g) + (k + 1) \ \ge \ k + 2 . \label{BNnumDiff}
\end{multline}
As $C$ is general, then, $W^{k+1}_\ell$ is not dense in $W^k_\ell$; whence $h^0 (L) = k+1$ for general $L \in W^k_\ell$. For the rest: If $L \in W^k_\ell$ and $p$ is a base point of $|L|$, then $L(-p) \in W^k_{\ell - 1}$, and $L$ belongs to the image of the map $W^k_{\ell - 1} \times C \to W^k_\ell$ given by $(L', p) \mapsto L'(p)$. By generality of $C$, the locus of such $L$ has dimension at most $\beta^{k+1}_{1, \ell - 1} + 1$. A calculation similar to that in (\ref{BNnumDiff}) shows that this dimension is strictly less than $\beta^{k+1}_{1, \ell} = \dim W^k_\ell$. 
 It follows that a general $L \in W^k_\ell$ is base point free.
\end{proof}

\subsection{Higher rank Clifford indices}

We recall now some facts and results on Clifford indices of vector bundles over $C$, which will be used in constructing linearly stable systems in {\S} \ref{LinStExamples}. A reference for the following is \cite{LNe}.

\begin{definition} \label{DefnGammaR}
Let $C$ be a curve of genus $g \ge 4$. For any vector bundle $E \to C$, we define
\[
\gamma(E) \ := \ \frac{1}{r} \left( d - 2 ( h^0 (E) - r ) \right) \ = \ \mu(E) - 2 \cdot \frac{h^0 (E)}{r} + 2 .
\]
The \emph{$r$th Clifford index of $C$} which we shall use is
\[
\gamma_r' (C) \ := \ \min \left\{ \gamma (E) : E \to C \hbox{ semistable of rank $r$ with $\mu(E) \le g - 1$ and $h^0 (E) \ge 2r$} \right\} .
\]
\end{definition}

It is well known that if $C$ is general in moduli, then
\begin{equation} \label{CliffordIndexGeneral}
\gamma_1' (C) \ = \ d_1 (C) - 2 \ = \ \left\lceil \frac{g}{2} - 1 \right\rceil .
\end{equation}
For any $C$, taking sums of line bundles, one sees that $\gamma_r' (C) \le \gamma_1' (C)$. The Mercat conjecture (see \cite[{\S} 3]{LNe}) would imply that equality obtains. This is now known to be false for special curves; see for example \cite[Theorem 1.1]{FO}. However, we shall use the following important result of Bakker and Farkas \cite{BF} for rank two bundles over general curves.

\begin{theorem}[\cite{BF}, Theorem 1] \label{MercatRankTwo}
Let $C$ be a curve of genus $g \ge 4$ which is general in moduli. Then $\gamma_2'(C) = \gamma_1' (C) = d_1 (C) - 2$.
\end{theorem}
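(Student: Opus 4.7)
The easy inequality $\gamma_2'(C) \le \gamma_1'(C)$ is realized by a direct sum: choose a line bundle $L$ computing $\gamma_1'(C)$, i.e.\ $\deg L = d_1(C)$, $h^0(L) = 2$, and set $E = L \oplus L$. Then $E$ is (strictly) semistable, $\mu(E) = d_1(C) \le g-1$ (using $d_1(C) = \lceil g/2\rceil+1 \le g-1$ for $g\ge 4$), $h^0(E) = 4$, and $\gamma(E) = d_1(C) - 2 = \gamma_1'(C)$. This already gives the upper bound; the content of the theorem is the reverse inequality.

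For $\gamma_2'(C) \ge \gamma_1'(C)$, the plan is to argue by contradiction: suppose some semistable rank $2$ bundle $E$ on a general $C$ has $\mu(E)\le g-1$, $h^0(E) \ge 4$ and $\gamma(E) < \gamma_1'(C)$. The standard route (going back to Lazarsfeld's proof of the genericity of Clifford index) is to use that a Brill--Noether general $C$ of genus $g$ sits in a polarized K3 surface $(S, H)$ of Picard rank one with $H^2 = 2g-2$. Viewing $E$ as a sheaf supported on $C \subset S$ and choosing a suitable subspace $V \subseteq H^0(E)$ of dimension four, form the Lazarsfeld--Mukai bundle $\mathcal{F}_{E,V}$ defined by the evaluation sequence
\[
0 \ \to \ \mathcal{F}^\vee_{E,V} \ \to \ V \otimes \mathcal{O}_S \ \to \ E \ \to \ 0,
\]
a locally free sheaf on $S$ with Mukai vector determined by $\mathrm{rk}\,E$, $\deg E$ and $\dim V$. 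The hypothesis that $\gamma(E)$ is small translates into $c_1(\mathcal{F}_{E,V})$ having small norm relative to $c_2$.

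The next step is a stability dichotomy for $\mathcal{F}_{E,V}$ on $(S,H)$. If $\mathcal{F}_{E,V}$ is $H$-stable (equivalently $H$-simple), then the Hodge index theorem on $S$ forces a numerical inequality which, after unwinding, says $\gamma(E) \ge \gamma_1'(C)$, the desired contradiction. If $\mathcal{F}_{E,V}$ is $H$-unstable, a maximal destabilizing subsheaf $\mathcal{G} \subset \mathcal{F}_{E,V}$ has $c_1(\mathcal{G}) = aH$ for some $a$ (since $\mathrm{Pic}(S) = \mathbb{Z}\cdot H$), and restricting to $C$ produces a line or rank-two subsystem of $(E, V)$ whose Clifford contribution is at least $\gamma_1'(C)$ by the known rank-one Mercat (i.e.\ \eqref{CliffordIndexGeneral}); a Bogomolov-type inequality for $\mathcal{F}_{E,V}/\mathcal{G}$ then propagates this back to a lower bound on $\gamma(E)$.

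The main obstacle is the unstable case: one must enumerate the finitely many possible Mukai vectors of destabilizing $\mathcal{G}$ compatible with the numerics of $\mathcal{F}_{E,V}$, and rule each out either by Bogomolov, by Serre duality on $S$, or by the genericity of $(S,H)$ (which restricts which classes $aH$ can carry effective divisors). This is the delicate combinatorial/numerical part where the innovation of \cite{BF} enters, and where one uses that $C$ is Brill--Noether general to exclude sporadic cases that could in principle occur on special curves.
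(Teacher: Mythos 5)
The paper does not prove this statement: it is quoted verbatim from Bakker and Farkas (\cite{BF}, Theorem 1) as an external input, so there is no internal proof to compare yours against. Judged on its own, your sketch gets the easy half right ($\gamma_2'(C)\le\gamma_1'(C)$ via $L\oplus L$, exactly as the paper remarks for general $r$), and it correctly identifies the Lazarsfeld--Mukai/K3 circle of ideas behind \cite{BF}. But it contains a concrete false step: a Brill--Noether general curve of genus $g$ does \emph{not} lie on a K3 surface once $g\ge 12$, since the locus of K3 sections in $M_g$ has dimension $19+g<3g-3$. The actual argument must therefore first reduce to K3 sections by a specialization/semicontinuity argument --- one has to show that the locus of curves carrying a semistable rank-two bundle violating the Mercat bound is closed in $M_g$, which requires controlling limits of such bundles --- and only then run the Lazarsfeld--Mukai analysis on a \emph{general} polarized K3 of Picard rank one. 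That genericity is essential and cannot be elided: by Farkas--Ortega \cite{FO} (cited in this very paper) Mercat's conjecture genuinely fails for suitable special K3 sections, so the ``stability dichotomy plus Hodge index'' step must break down there, and the case analysis of destabilizing subsheaves that you defer to ``the delicate combinatorial/numerical part'' is precisely where the theorem is actually proved. As it stands your text is a plausible roadmap to \cite{BF}, not a proof.
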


\subsection{Secant loci and generalized secant loci} \label{GenSec}

We shall use the following in Proposition \ref{PlaneCurveSing} and Example \ref{NYN}. Let $L$ be a line bundle such that $\Kc L^{-1}$ has nonempty linear system. Denote by $\Ce$ the $e$th symmetric product of $C$. Generalizing the varieties $C^r_d$ studied in \cite[Chapter IV]{ACGH}, for $e \ge f \ge 0$ we consider the \textsl{secant locus}
\begin{equation} \label{VefKLiDefn}
\Vef (\Kc L^{-1}) \ := \ \left\{ D \in \Ce : h^0 ( \Kc L^{-1}(-D)) \ge h^0 (\Kc L^{-1}) - e + f \right\} .
\end{equation}
This is a determinantal subvariety of $\Ce$, of expected dimension $e - f \cdot ( h^0 (M) - e + f )$. These varieties have been studied in \cite{AS}, \cite{Baj} and in very many other works\footnote{Note that $\Vef (\Kc L^{-1})$ is denoted by $V_e^f ( \Kc L^{-1})$ in \cite{Baj}.}. By Riemann--Roch and Serre duality, we have also
\begin{equation} \label{VefLDefn}
\Vef (\Kc L^{-1}) \ = \ \left\{ D \in \Ce : h^0 (L(D)) \ge h^0 (L) + f \right\} .
\end{equation}

We consider also the following direct generalization to higher rank. Let $S$ be a locally free sheaf of rank $r$ over $C$ with $h^0 (\Kc \otimes S^\vee ) \ge 1$. For $e \ge 1$, let $\Quot^e (S^\vee)$ be the Quot scheme parametrizing torsion quotients of length $e$ of $S^\vee$; equivalently, locally free subsheaves of full rank and degree $\deg S - e$. The following \textsl{generalized secant locus} is a special case of \cite[Definition 2.17]{Hit}:
\begin{multline*}
\Qef ( S, \Kc, H^0 (\Kc \otimes S^\vee) ) \ = \ \left\{ \left[ S^\vee \subset F^\vee \right] \in \Quot^e (S^\vee) : \right. \\
\left. h^0 (\Kc \otimes F^\vee) \ge h^0 ( \Kc \otimes S^\vee ) - e + f \right\}
\end{multline*}
This is a determinantal subvariety of $\Quot^e (S^\vee)$, of expected dimension
\[
re - f \cdot ( h^0 (\Kc \otimes S^\vee) - e + f ) .
\]
As above, $\Qef ( S, \Kc, H^0 (\Kc \otimes S^\vee) )$ may be realized as
\begin{equation} \label{GenSecRkR}
\left\{ \left[ S^\vee \subset F^\vee \right] \in \Quot^e (S^\vee) : h^0 ( F ) \ge h^0 ( S ) + f \right\} .
\end{equation}
In the sequel, we shall most often be concerned with the case $f = 1$.

\section{Comparing linear stability and \texorpdfstring{$\alpha$}{alpha}-stability} \label{comparing}

Let $\alps$ and $\alpl$ be as defined in the introduction. We shall investigate which combinations of $\alps$-, $\alpl$- and linear stability can occur for a generated coherent system over $C$. On a related note, Butler discusses in \cite{But97} the relation between $\alps$-stability of a coherent system and slope stability of the ambient vector bundle. We sum up this discussion in the following useful lemma.

\begin{lemma} \label{StVBImpliesAlps} Let $C$ be a curve and $(E, V)$ a coherent system over $C$.
\begin{enumerate}
\item[(a)] If $E$ is a stable vector bundle, then $(E, V)$ is $\alps$-stable.
\item[(b)] If $E$ is a nonsemistable vector bundle, then $(E, V)$ is not $\alps$-semistable.
\item[(c)] If $(E, V)$ is $\alps$-stable, then $E$ is a semistable vector bundle.
\end{enumerate}
\end{lemma}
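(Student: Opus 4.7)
The plan is to work with the identification of $\alps$-stability with $\alpha$-stability for all sufficiently small $\alpha > 0$: since $\alps$ lies strictly below the first critical value $\alpha_1$, verifying the stability inequality at some $\alpha$ in a right neighborhood of $0$ is equivalent to verifying it at $\alpha = \alps$. For any proper subsystem $(F, W)$ one has
\[ \mu_\alpha(F, W) - \mu_\alpha(E, V) \ = \ \bigl[\mu(F) - \mu(E)\bigr] + \alpha \bigl[\tfrac{\dim W}{\rank F} - \tfrac{\dim V}{\rank E}\bigr], \]
so the zeroth-order bracket dictates the sign of the difference whenever it is nonzero.

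For (a), assume $E$ is stable and take any proper subsystem $(F, W)$. If $F = E$ as a subsheaf of $E$, then properness forces $W \subsetneq V$, and the displayed difference reduces to $\alpha(\dim W - \dim V)/\rank E$, which is strictly negative for every $\alpha > 0$. Otherwise $F$ is a proper subsheaf of $E$, stability gives $\mu(F) < \mu(E)$, and the zeroth-order term is strictly negative, so the full expression is negative for all sufficiently small $\alpha > 0$. Invariance of $\alpha$-stability on the interval $(0, \alpha_1)$ then yields that $(E, V)$ is $\alps$-stable.

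For (b), assume $E$ is not semistable and let $F \subset E$ be a subsheaf with $\mu(F) > \mu(E)$, for instance the maximal destabilizing subsheaf in the Harder--Narasimhan filtration; set $W := V \cap H^0(F)$, so that $(F, W)$ is a coherent subsystem of $(E, V)$. The zeroth-order term in the analogous difference is now strictly positive, hence the full expression stays positive in a right neighborhood of $\alpha = 0$; thus $(F, W)$ strictly $\alps$-destabilizes $(E, V)$, which is therefore not even $\alps$-semistable. Part (c) is then immediate as the contrapositive of (b): if $E$ were not semistable, then $(E, V)$ would fail to be $\alps$-semistable and \emph{a fortiori} fail to be $\alps$-stable.

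The argument contains no genuine obstacle; the only point requiring care is the replacement of $\alps$ by ``any sufficiently small $\alpha > 0$'', which is precisely the content of the critical-value discussion recalled in the introduction.
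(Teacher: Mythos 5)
Your proof is correct and follows the same basic mechanism as the paper's: write $\mu_\alpha(F,W)-\mu_\alpha(E,V)$ as an affine function of $\alpha$ and let the constant term decide the sign for $\alpha$ small. Two differences are worth recording. First, for part (c) the paper gives a separate direct argument: the $\alpha$-stability inequality rearranges to $d\,r_F - d_F\,r > \alpha\,(n_F r - n r_F)$, and since the left-hand side is an integer, taking $\alpha$ small enough forces $d\,r_F - d_F\,r \ge 0$; you instead deduce (c) as the contrapositive of (b) together with the trivial implication ``$\alps$-stable $\Rightarrow$ $\alps$-semistable'', which is shorter and equally valid. Second, in (a) your argument produces, for each subsystem, a threshold below which the slope inequality holds, and you then appeal to constancy of $\alpha$-stability on $(0,\alpha_1)$; strictly speaking, that constancy can only be invoked once a \emph{single} $\alpha_0\in(0,\alpha_1)$ has been exhibited at which no subsystem destabilizes, so a uniform positive lower bound on the thresholds is needed. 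The paper supplies exactly this by noting that only finitely many types $(r_F,d_F,n_F)$ with $\frac{n_F}{r_F}>\frac{n}{r}$ can occur (rank and section counts are bounded by $r$ and $n$, and $d_F$ is bounded above by stability of $E$), so this is a one-line repair rather than a flaw in your approach. On the other hand, your explicit treatment of the full-rank case $F=E$ with $W\subsetneq V$, where $\mu(F)-\mu(E)=0$ and only the $\alpha$-term is (strictly) negative, is more careful than the paper's, which asserts $\mu(E)-\mu(F)>0$ for every proper subsystem.
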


\begin{proof}
This is implicit in \cite[p.\ 3]{But97}; we give the details for the reader's convenience. If $(F, W) \subset (E, V)$ is a proper subsystem, we write $(r_F, d_F, n_F)$ for the type of $(F, W)$.

(a) Let $(F, W) \subset (E, V)$ be a proper subsystem. We wish to show that if $\alpha$ is close to zero, then we have
\begin{equation} \label{AlphaSlopeIneq}
\alpha \cdot \left( \frac{n_F}{r_F} - \frac{n}{r} \right) \ < \ \mu (E) - \mu (F).
\end{equation}
Since $E$ is stable, $\mu(E) - \mu(F) > 0$. Thus if $\frac{n_F}{r_F} - \frac{n}{r} \le 0$ then (\ref{AlphaSlopeIneq}) holds for all $\alpha \ge 0$. Otherwise, for any $\alpha$ such that
\[ 0 \ < \ \alpha \ < \ \frac{\mu (E) - \mu(F)}{\frac{n_F}{r_F} - \frac{n}{r}} , \]
we have $\mu_\alpha (F, W) < \mu_\alpha (E, V)$. As there are finitely many possible types $(r_F, d_F, n_F)$ with $\frac{n_F}{r_F} - \frac{n}{r} > 0$, for one $\alpha$ sufficiently close to zero we obtain (\ref{AlphaSlopeIneq}) for all proper subsystems $(F, W)$. Thus $(E, V)$ is $\alps$-stable.

(b) Suppose that $F \subset E$ is a subbundle such that $\mu(F) > \mu (E)$. Then we claim for $\alpha$ close to $0$ that
\[ \mu (F) - \mu (E) \ > \ \alpha \cdot \left( \frac{n}{r} - \frac{n_F}{r_F} \right) . \]
If $\frac{n}{r} - \frac{n_F}{r_F} \le 0$ then this holds for $\alpha \ge 0$; otherwise it holds for any $\alpha$ such that
\[ 0 \ < \ \alpha \ < \ \frac{\mu (F) - \mu(E)}{\frac{n}{r} - \frac{n_F}{r_F}} . \]
For such $\alpha$ we have $\mu_\alpha (F, W) > \mu_\alpha (E, V)$, and therefore $(E, V)$ is not $\alps$-semistable.

(c) Let $(F, W) \subset (E, V)$ be a proper subsystem. It is easy to check that by the $\alpha$-slope inequality for $\alpha$ close to zero,
\begin{equation} \label{PartCineq}
d r_F - d_F r \ > \ \alpha \cdot ( n_F r - n r_F ) .
\end{equation}
If $n_F r - n r_F \ge 0$ then $\mu (E) - \mu(F) > 0$. Otherwise, choose $\alpha$ such that $0 < \alpha < \frac{1}{-( n_F r - n r_F )}$. Then it follows from (\ref{PartCineq}) that $d r_F - d_F r$ is an integer greater than $-1$, and so we obtain $\mu (E) - \mu(F) \ge 0$. Thus $E$ is semistable.
\end{proof}

\noindent Let us now prove the only implication holding between our three types of stability.

\begin{proposition} \label{OneImpl}
Suppose that $(E, V)$ is a generated coherent system of type $(r, d, n)$ with $d > 0$ and $r \ge 2$ which is $\alps$-stable and linearly stable. Then $(E, V)$ is $\alpl$-stable.
\end{proposition}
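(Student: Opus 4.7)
The plan is to unwrap $\alps$-stability and $\alpl$-stability into their ``limit'' forms, then use linear stability to bridge between them. Writing $\nu(F,W):=\dim W/\rank F$ for brevity, the slope inequality $\mu_\alpha(F,W)<\mu_\alpha(E,V)$ rearranges as $\mu(F)-\mu(E)<\alpha(\nu(E,V)-\nu(F,W))$. Letting $\alpha\to0^+$ shows that $\alps$-stability is equivalent to: every proper $(F,W)$ has $\mu(F)\le\mu(E)$, with equality forcing $\nu(F,W)<\nu(E,V)$. Letting $\alpha\to\infty$ shows that $\alpl$-stability is equivalent to: every proper $(F,W)$ has $\nu(F,W)\le\nu(E,V)$, with equality forcing $\mu(F)<\mu(E)$. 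Note also that since $(E,V)$ is generated with $d>0$, one has $n>r$, hence $\nu(E,V)>1$.

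With these characterizations in hand, I would argue by contradiction: suppose a proper subsystem $(F,W)\subsetneq(E,V)$ violates the $\alpl$-criterion. The possibility $\nu(F,W)=\nu(E,V)$ with $\mu(F)\ge\mu(E)$ is immediately incompatible with $\alps$-stability, so we must have $\nu(F,W)>\nu(E,V)$; and then $\alps$-stability forces $\mu(F)<\mu(E)$.

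Since linear stability is phrased for \emph{generated} subsystems, the next step is to pass to $(F',W)$, where $F':=\Image(W\otimes\Oc\to E)\subseteq F$. Because $\rank F'\le\rank F$, one has $\nu(F',W)\ge\nu(F,W)>n/r>1$, so in particular $\dim W>\rank F'$ and $F'\subsetneq E$. The quantity $\deg F'$ is nonnegative since $F'$ is generated; moreover, if $\deg F'=0$ then $F'\cong\Oc^{\rank F'}$ and hence $\dim W\le h^0(F')=\rank F'$, contradicting the strict inequality just obtained. So $\deg F'>0$, and $\alps$-stability applied to $(F',W)$ once more yields $\mu(F')<\mu(E)$; in particular $0<\mu(F')<\mu(E)$.

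Finally, linear stability applied to $(F',W)$ reads $\deg F'/(\dim W-\rank F')>d/(n-r)$, which after dividing through by $r\cdot\rank F'$ becomes
\[
\mu(F')\cdot\bigl(\tfrac{n}{r}-1\bigr)\ >\ \mu(E)\cdot\bigl(\nu(F',W)-1\bigr).
\]
But combining $0<\mu(F')<\mu(E)$ with $0<\tfrac{n}{r}-1<\nu(F',W)-1$ gives the opposite strict inequality, a contradiction. The main (mild) obstacle is the replacement step: one must check that the strict inequality $\nu>n/r$ and the positivity $\deg F'>0$ both survive the passage from an arbitrary destabilizing $(F,W)$ to the generated subsystem $(F',W)$; once these are in place, the final two-term chain of inequalities is formal.
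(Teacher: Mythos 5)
Your argument is correct and follows essentially the same route as the paper's: pass to the subsheaf $F' = F_W$ generated by $W$, use semistability of $E$ (a consequence of $\alps$-stability) to bound $\mu(F_W)$, and play this off against the linear stability inequality for $(F_W, W)$ to control $\dim W / \rank F_W$. The only cosmetic differences are that you argue by contradiction via the limiting characterizations of $\alps$- and $\alpl$-stability, and that you dispose of the degenerate case $\deg F_W = 0$ by noting $\dim W > \rank F'$ rather than by enlarging $\alpl$ as the paper does.
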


\begin{proof}
Let $(F, W)$ be a coherent subsystem of $(E, V)$. We wish to show that
\begin{equation} \label{goal}
\frac{\deg F}{\rank F} + \alpl \cdot \frac{\dim W}{\rank F} \ < \ \frac{d}{r} + \alpl \cdot \frac{n}{r} .
\end{equation}
Let $F_W$ be the subsheaf of $F$ generated by $W$. We dispose firstly of a special case. Suppose that $\deg F_W = 0$; equivalently, that $F_W$ is a trivial sheaf. Then we have
\[
0 \ \le \ \dim W \ = \ \rank F_W \ \le \ \rank F ,
\]
and it follows that
\[
\mu_\alpl (F, W) \ = \ \mu(F) + \alpl \cdot \frac{\dim W}{\rank F} \ \le \ \mu(F) + \alpl .
\]
As $d > 0$ and $(E, V)$ is generated, $\frac{n}{r} > 1$. Thus, after increasing $\alpl$ if necessary, we obtain
\[
\mu_\alpl (E, V) \ = \ \frac{d}{r} + \alpl \cdot \frac{n}{r} \ > \ \mu(F) + \alpl \ \ge \ \mu_\alpl (F, W) .
\]
Thus we may assume that $\deg F_W > 0$.

Now since we have supposed $(E, V)$ to be $\alps$-stable, $E$ is a semistable vector bundle by Lemma \ref{StVBImpliesAlps} (c); and so $\frac{\deg F}{\rank F} \le \frac{d}{r}$. Therefore, the slope inequality (\ref{goal}) would follow if we could show that
\[
\frac{\dim W}{\rank F} \ < \ \frac{n}{r} .
\]
Since $\rank F \ge \rank F_W$, also $\frac{\dim W}{\rank F} \le \frac{\dim W}{\rank F_W}$. Thus in fact it would suffice to prove that
\begin{equation} \label{goalPrime}
\frac{\dim W}{\rank F_W} \ < \ \frac{n}{r} .
\end{equation}

Now since $E$ is semistable, we have also $\frac{\deg F_W}{\rank F_W} \le \frac{d}{r}$. As $F_W$ is generated and of positive degree, $\frac{\dim W}{\rank F_W} - 1 > 0$, and so
\begin{equation} \label{one}
\frac{\deg F_W / \rank F_W}{\frac{\dim W}{\rank F_W} - 1} \ \le \ \frac{d / r}{\frac{\dim W}{\rank F_W} - 1} .
\end{equation}
Furthermore, as $E$ is semistable of positive slope, it has no nonzero trivial quotient. Thus, as $(F_W , W)$ is a generated subsystem of positive degree in $(E, V)$, the hypothesis of linear stability implies that
\begin{equation} \label{two}
\frac{d/r}{n/r - 1} \ = \ \frac{d}{n-r} \ < \ \frac{\deg F_W}{\dim W - \rank F_W} \ = \ \frac{\deg F_W / \rank F_W}{\frac{\dim W}{\rank F_W} - 1} .
\end{equation}
Combining (\ref{one}) and (\ref{two}), we obtain
\[
\frac{d/r}{n/r - 1} \ < \ \frac{d / r}{\frac{\dim W}{\rank F_W} - 1} ,
\]
from which the desired inequality (\ref{goalPrime}) follows easily.
\end{proof}

In what follows, we shall show that all the remaining combinations of stabilities in (\ref{Overview}) can occur. To warm up, we exhibit a coherent system which is neither $\alps$-, $\alpl$- nor linearly semistable.

\begin{example} \label{NNN}
Let $L$ and $M$ be generated line bundles with $\deg M > \deg L > 0$ and $n_1 := h^0 (M) = h^0 (L)$. For any $\alpha \ge 0$, we have
\[
\mu_\alpha (M) \ = \ \deg M + \alpha \cdot h^0 (M) \ > \ \mu (L \oplus M) + \alpha \cdot \frac{h^0 (L \oplus M)}{2} \ = \ \mu_\alpha \left( L \oplus M \right) .
\]
Thus the system is not $\alpha$-semistable for any $\alpha \ge 0$. For the rest: As $\frac{L \oplus M}{L}$ is not trivial and
\[
\lambda (L) \ = \ \frac{\deg L}{n_1 - 1} \ < \ \frac{\deg L + \deg M}{2 \cdot (n_1 - 1)} \ = \ \lambda \left( L \oplus M \right) ,
\]
the system is not linearly semistable. \qed
\end{example}

\begin{remark} \label{DifferentDestab}
Not unexpectedly, this is the easiest of our examples to obtain. It is nonetheless instructive that a linearly desemistabilizing subsystem may not be $\alpha$-destabilizing, and vice versa. This illustrates the intuition after (\ref{Overview}).
\end{remark}

\section{Linearly nonsemistable coherent systems} \label{NonLinStExamples}

The following extension construction will yield several of our examples.

\begin{lemma} \label{mfExt}
Let $L_1$ and $L_2$ be generated line bundles over $C$. For $i \in \{ 1, 2 \}$, write $\deg L_i =: \ell_i$ and $h^0 (L_i) =: k_i$. Suppose that
\begin{equation} \label{AllSectsLift}
\ell_2 \ > \ k_1 k_2 + (k_2 - 1)(g - 1 - \ell_1) .
\end{equation}
Then there exists a nontrivial extension $0 \to L_1 \to E \to L_2 \to 0$ in which all sections of $L_2$ lift to $E$. In particular, the coherent system $(E, H^0(E))$ is of type $(2, \ell_1 + \ell_2, k_1 + k_2 )$ and generated with no nonzero trivial quotient.
\end{lemma}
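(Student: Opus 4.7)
The plan is to build $E$ as a carefully chosen extension of $L_2$ by $L_1$, and to arrange the extension class so that the connecting map in cohomology vanishes. Recall that extensions $0 \to L_1 \to E \to L_2 \to 0$ are classified by $\mathrm{Ext}^1(L_2, L_1) \cong H^1(L_1 \otimes L_2^{-1})$, and that for an extension class $e$ the connecting map $\delta_e \colon H^0(L_2) \to H^1(L_1)$ in the associated long exact sequence is cup product with $e$. A section $s \in H^0(L_2)$ lifts to $H^0(E)$ if and only if $\delta_e(s) = 0$, so every section of $L_2$ lifts precisely when $e$ lies in the kernel of the multiplication map
\[
\mu \colon H^1(L_1 \otimes L_2^{-1}) \to \Hom \bigl( H^0(L_2), H^1(L_1) \bigr) .
\]
A suitable nontrivial extension exists provided $\mu$ has a nonzero kernel.

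Next, by Serre duality the dual of $\mu$ is identified with the cup-product map on global sections
\[
\mu^\vee \colon H^0(L_2) \otimes H^0(\Kc \otimes L_1^{-1}) \to H^0(\Kc \otimes L_1^{-1} \otimes L_2) ,
\]
so $\Ker \mu \neq 0$ is equivalent to $\mu^\vee$ being non-surjective. I would establish non-surjectivity by a dimension count. Serre duality and Riemann--Roch give $h^0(\Kc \otimes L_1^{-1}) = k_1 + g - 1 - \ell_1$ and $h^0(\Kc \otimes L_1^{-1} \otimes L_2) \ge g - 1 - \ell_1 + \ell_2$. Thus the image of $\mu^\vee$ has dimension at most $k_2 ( k_1 + g - 1 - \ell_1 )$, and the hypothesis (\ref{AllSectsLift}) is precisely the inequality saying this bound is strictly smaller than $g - 1 - \ell_1 + \ell_2$. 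Consequently $\mu$ has nontrivial kernel, and any nonzero $e \in \Ker \mu$ produces a nontrivial extension $E$ in which every section of $L_2$ lifts.

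The remaining assertions then follow cleanly. The long exact sequence with $\delta_e = 0$ yields $h^0(E) = h^0(L_1) + h^0(L_2) = k_1 + k_2$, confirming the type $(2, \ell_1 + \ell_2, k_1 + k_2)$. Generation is checked pointwise: at any $x \in C$, the sections of $L_1$ (viewed inside $H^0(E)$) already cover $L_1|_x \subset E|_x$ since $L_1$ is generated, while lifted sections of $L_2$ surject onto $E|_x / L_1|_x = L_2|_x$ since $L_2$ is generated, so the evaluation map $H^0(E) \to E|_x$ is surjective. The main technical point is the Serre-duality identification of $\mu$ with $\mu^\vee$; once that is in hand the dimension estimate is forced by the stated inequality, and the rest is formal.
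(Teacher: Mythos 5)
Your proof is correct and follows essentially the same route as the paper: both reduce the lifting condition to finding a nonzero element in the kernel of the cup-product map $H^1(L_1 L_2^{-1}) \to \Hom\bigl(H^0(L_2), H^1(L_1)\bigr)$ and then verify this by the dimension count encoded in (\ref{AllSectsLift}). The only cosmetic difference is that you pass to the Serre-dual multiplication map and argue non-surjectivity there, whereas the paper compares $h^1(L_2^{-1}L_1)$ with $h^0(L_2)\cdot h^1(L_1)$ directly; the inequality used is identical.
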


\begin{proof}
Let $0 \to L_1 \to E \to L_2 \to 0$ be an extension with class $\delta_E \in H^1 (L_2^{-1} L_1 )$. The condition that all sections of $L_2$ lift to $E$ is equivalent to saying that the cup product map $\cup \delta_E \colon H^0 (L_2) \to H^1 (L_1)$ is zero. Thus there exists a nontrivial extension with this property if and only if
\[
\Ker \left( \cup \colon H^1 (L_2^{-1} L_1) \ \to \ \Hom \left( H^0 (L_2), H^1 (L_1) \right) \right)
\]
is nonzero. Clearly this is the case if $h^1 (L_2^{-1} L_1) > h^0 (L_2) \cdot h^1 (L_1)$. Let us check that this follows from the inequality (\ref{AllSectsLift}). By Riemann--Roch, we have
\[
h^1 (L_2^{-1} L_1) \ \ge \ -\chi (L_2^{-1} L_1) \ = \ 
 \ell_2 + ( g - 1 - \ell_1 ) ,
\]
and $h^1 (L_1) = 
k_1 + ( g - 1 - \ell_1 )$. Thus $h^1 (L_2^{-1} L_1) > h^0 (L_2) \cdot h^1 (L_1)$ if
\[
\ell_2 + ( g - 1 - \ell_1 ) \ > \ k_2 k_1 + k_2 ( g - 1 - \ell_1 ) ,
\]
that is, $\ell_2 > k_2 k_1 + (k_2 - 1) (g - 1 - \ell_1)$, as desired. To finish: Clearly each $h^0 (L_i^\vee) = 0$, and so $E$ has no nonzero trivial quotient.
\end{proof}

We now use Lemma \ref{mfExt} to construct a generated coherent system which is $\alpha$-stable for all $\alpha \ge 0$, but not linearly semistable.

\begin{example} \label{YYN}
Let $C$ be a general curve of genus $g \ge 6$. Firstly, we claim that we may choose an integer $\ell$ satisfying $d_1 (C) < \ell \le g$, where $d_1 (C) = \left\lceil \frac{g}{2} + 1 \right\rceil$ is as defined in {\S} \ref{GonalitySeq}. Such an $\ell$ exists if there is an integer between $g$ and $\left\lceil \frac{g}{2} + 1 \right\rceil$. This is certainly the case if
\[
g - \left( \frac{g}{2} + 1 \right) \ \ge \ 2 ;
\]
which follows from the hypothesis that $g \ge 6$.

Thus we may choose $L_1 \in W_{\ell}^1$ and $L_2 \in W_{\ell + 1}^1$. Perturbing if necessary, by Lemma \ref{BasePointFree} we may assume that $h^0 (L_1) = 2 = h^0 (L_2)$ and that both the $L_i$ are generated. 
 Now set
\[
k_1 \ = \ k_2 \ = \ 2 \quad \ \hbox{and} \quad \ell_1 \ = \ \ell \quad \hbox{and} \quad \ell_2 \ = \ \ell + 1 ,
\]
The inequality (\ref{AllSectsLift}) now becomes 
\[
\ell + 1 \ > \ 2 \cdot 2 + (2 - 1) (g - 1 - \ell) ;
\]
that is, $\ell > \frac{g}{2} + 1$, which follows by hypothesis. Thus by Lemma \ref{mfExt}, there exists an extension $0 \to L_1 \to E \to L_2 \to 0$ such that $(E, H^0 (E))$ is generated of type $(2, 2\ell + 1, 4)$. Let $N \subset E$ be a line subbundle. As the extension is nontrivial, it is easy to see that $\deg N \le \ell$ and $h^0 (N) \le 2$. Thus
\[
\mu_\alpha ( N ) \ \le \ \ell + \alpha \cdot 2 \ < \ \frac{2 \ell + 1}{2} + \alpha \cdot \frac{4}{2} \ = \ \mu_{\alpha} (E)
\]
for any $\alpha \ge 0$, and in particular $E$ is both $\alps$- and $\alpl$-stable. However,
\[
\lambda (L_1) \ = \ \frac{\ell}{h^0 (L_1) - 1} \ = \ \ell \ < \ \ell + \frac{1}{2} \ = \ \frac{2 \ell + 1}{h^0 (E) - 2} \ = \ \lambda(E) ,
\]
where $\lambda$ is the linear slope function (\ref{LinearSlope}). As $E/L_1$ is not trivial, $(E, H^0 (E))$ is not linearly semistable. \qed
\end{example}

\begin{remark} \label{subpencil}
In the terminology of \cite{BMNO}, the system $(E, H^0 (E))$ above \emph{contains a pencil}; that is, a subsystem of type $(1, \ell, 2)$. In general, if $n \le 2r$ and $(E, V)$ is generated of type $(r, d, n)$ with semistable ambient bundle, then
\[
\frac{\deg L}{\dim \left( V \cap H^0 (L) \right) - 1} \ \le \ \deg L \ \le \ \frac{d}{r} \ \le \ \frac{d}{n - r} .
\]
It is then easy to see that any pencil linearly destabilizes $(E, V)$. By (\ref{LinStNec}), it also prevents $\mev$ from being stable. The role of pencils in destabilizing dual span bundles is discussed in \cite[{\S\S} 4--5]{BMNO}.
\end{remark}

In analogy with the notion of containing a pencil discussed above, we make the following definition.

\begin{definition} \label{subnet}
Let $(E, V)$ be a coherent system of type $(r, d, n)$ where $n \ge 3$. We say that \textsl{$(E, V)$ contains a net} if $(E, V)$ has a coherent subsystem of type $(1, e, 3)$.
\end{definition}

We now use this notion to give a characterization of $\alpl$-stability of systems of type $(2, d, 5)$ which will be used several times.

\begin{lemma} \label{SpecialImpl}
Let $(E, V)$ be a coherent system of type $(2, d, 5)$. If $(E, V)$ contains a net, then it is not $\alpl$-semistable. If $(E, V)$ does not contain a net, then it is $\alpl$-stable.
\end{lemma}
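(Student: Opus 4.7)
The plan is to write the $\alpl$-slope comparison for any proper coherent subsystem $(F, W) \subset (E, V)$ as an affine function of $\alpl$, namely
\[
\mu_\alpl (F, W) - \mu_\alpl (E, V) \ = \ \left( \frac{\deg F}{\rank F} - \frac{d}{2} \right) + \alpl \left( \frac{\dim W}{\rank F} - \frac{5}{2} \right),
\]
and then to exploit the fact that $\alpl$ lies beyond the last critical value $\alpha_l$ to reduce the $\alpl$-slope comparison to its asymptotic behaviour as $\alpl \to \infty$. In this regime, the sign is governed by the coefficient $\frac{\dim W}{\rank F} - \frac{5}{2}$, with ties decided by the constant term.

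For the first assertion, I would suppose $(E, V)$ contains a net, i.e.\ a subsystem $(L, U)$ of type $(1, e, 3)$. Then $\frac{\dim U}{\rank L} - \frac{5}{2} = \frac{1}{2} > 0$, so $\mu_\alpl (L, U) > \mu_\alpl (E, V)$ for $\alpl$ sufficiently large, and hence for all $\alpl > \alpha_l$; therefore $(E, V)$ is not $\alpl$-semistable.

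For the converse, I would assume $(E, V)$ contains no net and take any proper subsystem $(F, W)$. If $\rank F = 1$, then necessarily $\dim W \le 2$, for otherwise any three-dimensional subspace of $W$ would furnish a net; hence $\frac{\dim W}{\rank F} - \frac{5}{2} \le -\frac{1}{2} < 0$. If $\rank F = 2$ with $W \subsetneq V$, then $\dim W \le 4$ and the same coefficient is again negative. In both cases the strict inequality $\mu_\alpl (F, W) < \mu_\alpl (E, V)$ holds for $\alpl$ large, hence for $\alpl > \alpha_l$. The one remaining possibility is $\rank F = 2$, $W = V$ and $F \subsetneq E$: the $\alpl$-coefficient then vanishes, but the constant term $\frac{\deg F - d}{2}$ is strictly negative because $\deg F \le d - 1$. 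In every case $(E, V)$ is $\alpl$-stable.

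The only delicate point I anticipate is this borderline case $\frac{\dim W}{\rank F} = \frac{n}{r} = \frac{5}{2}$, which occurs precisely when $W = V$ and $F$ is a rank-two proper subsheaf, and which must be disposed of separately via the strict degree inequality $\deg F < d$. Everything else is routine bookkeeping on the coefficient of $\alpl$.
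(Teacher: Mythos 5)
Your proof is correct and follows essentially the same route as the paper's: both reduce the $\alpl$-slope comparison to the sign of the coefficient $\frac{\dim W}{\rank F} - \frac{5}{2}$, which is positive exactly when the subsystem is a net and negative for every rank-one subsystem once nets are excluded. You are in fact slightly more thorough, since the paper treats only the rank-one subsystems explicitly and leaves the routine full-rank cases (where $\dim W \le 4$ or $W = V$ with $\deg F < d$) implicit.
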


\begin{proof}
If $(E, V)$ contains a net $(L, W)$ then it is easy to check that $(E, V)$ is not $\alpha$-semistable for $\alpha > d - 2 \cdot \deg L$.
 Conversely, if $(E, V)$ does not contain a net, then $\mu_\alpha (L, W) \le \deg L + 2 \alpha$ for any rank one subsystem $(L, W)$. If $e$ is the maximal degree attained by line subbundles of $E$, then for $\alpha > 2e - d$ we have
\[
\mu_\alpha (L, W) \ \le \ e + 2 \alpha \ < \ \mu(E) + \frac{5 \alpha}{2} \ = \ \mu_\alpha (E, V) .
\]
Therefore, $(E, V)$ is $\alpl$-stable.
\end{proof}

We now construct a system which is $\alps$-stable, but neither $\alpl$- nor linearly semistable.

\begin{example} \label{YNN}
Let $C$ be a general curve of genus $g \ge 12$. By a similar computation to that in Example \ref{YYN}, we see that we may choose an integer $\ell$ satisfying $d_2 (C) < \ell \le g$, where $d_2 (C) = \left\lceil \frac{2g}{3} + 2 \right\rceil$. 
 Let $L_1$ and $L_2$ be general elements of $W_{\ell}^2$ and $W_{\ell + 1}^1$. By Lemma \ref{BasePointFree}, we may assume that $h^0 (L_1) = 3$ and $h^0 (L_2) = 2$, and that both the $L_i$ are generated. 
 Now we set
\[
k_1 \ = \ 3 \quad \hbox{and} \quad k_2 \ = \ 2 , \quad \ \hbox{and as before} \quad \ell_1 \ = \ \ell \quad \hbox{and} \quad \ell_2 \ = \ \ell + 1 .
\]
Computing as in the previous example, we see that the inequality (\ref{AllSectsLift}) in this case follows from $\ell > d_2 (C)$. 
 Thus by Lemma \ref{mfExt}, there exists an extension $0 \to L_1 \to E \to L_2 \to 0$ such that $(E, H^0 (E))$ is generated of type $(2, 2\ell + 1, 5)$. As before, $\deg N \le \ell$ for all line subbundles $N \subset E$, and so $E$ is a stable vector bundle. By Lemma \ref{StVBImpliesAlps} (a), the system $(E, H^0 (E))$ is $\alps$-stable. However, as $(E, H^0 (E))$ contains the net $(L_1 , H^0 (L_1))$, by Lemma \ref{SpecialImpl} it is not $\alpl$-stable.
 For the rest: Since $(E, H^0 (E))$ is $\alps$-stable but not $\alpl$-stable, by Proposition \ref{OneImpl} it is at best linearly strictly semistable. But in fact
\[
\lambda (L_1) \ = \ \frac{\ell}{2} \ < \ \frac{2\ell + 1}{3} \ = \ \lambda(E) ;
\]
as $E/L_1$ is not trivial, $E$ is not linearly semistable. \qed \end{example}

The last of our linearly nonsemistable systems will be of type $(2, d, 5)$, and $\alpl$-stable but not $\alps$-stable. The construction is somewhat more involved than our other examples. The ambient bundle is constructed as an elementary transformation instead of an extension. Before beginning the construction, we prove the existence of a certain line bundle which we shall require.

For the remainder of {\S} \ref{NonLinStExamples}, we fix the following. Let $C$ be a general curve of genus $g \ge 18$. Set $e := \frac{1}{3} (g - 1 + \varepsilon)$, where $\varepsilon \in \{ 1, 2, 3 \}$ is such that $e$ is an integer. Then
\[
\beta^2_{1, g-e} \ = \ g - 2e - 2 
 \ \ge \ \frac{g-10}{3} \ > \ 0 .
\]
Thus $W^1_{g-e}$ is nonempty. Let $L \in W^1_{g-e}$ be a general element. By Lemma \ref{BasePointFree}, 
 we may assume that $h^0 ( L ) = 2$ and that $L$ is base point free.

\begin{proposition} \label{PlaneCurveSing}
Let $C$ and $L$ be as above. Then there exists an effective divisor $D \in \Ce$ such that $h^0 (L (D)) = 3$ and $L (D)$ is base point free.
\end{proposition}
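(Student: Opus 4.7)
Proof plan: I find $D$ as a sufficiently general point of a secant locus. Set $M := \Kc \otimes L^{-1}$, so by Riemann--Roch and the hypothesis $h^0(L) = 2$ we have $\deg M = g + e - 2$ and $h^0(M) = h^1(L) = e + 1$. By the alternative description (\ref{VefLDefn}) with $f = 1$, the secant locus
\[
\Veo(M) \ = \ \{ D \in \Ce : h^0 (L(D)) \ge 3 \}
\]
is a determinantal subvariety of $\Ce$ of expected dimension $e - (h^0(M) - e + 1) = e - 2$, which is at least $4$ since $e \ge 6$ for $g \ge 18$.

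The substantive step is to produce at least one $D \in \Veo(M)$. My preferred route is a cycle-class computation in $\Pic^g(C)$: by Poincar\'e's formula, $[W_e]$ is a positive rational multiple of $\theta^{g - e}$, and by the Kempf--Kleiman--Laksov formula, $[W^2_g]$ is a positive rational multiple of $\theta^6$. Their cup product is a nonzero positive class whenever $(g - e) + 6 \le g$, i.e.\ $e \ge 6$, which holds here. Consequently, for any $L \in W^1_{g-e}(C)$ the translate $L \otimes W_e$ meets $W^2_g$ in $\Pic^g(C)$, yielding at least one $D \in \Ce$ with $L(D) \in W^2_g$. The determinantal lower bound then upgrades this to $\dim \Veo(M) \ge e - 2$. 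Alternatively one may appeal to the existence results for secant loci of line bundles on general curves in \cite{AS, Baj}.

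Now pick $D$ general in an expected-dimensional component of $\Veo(M)$. For $h^0(L(D)) = 3$ exactly: the sublocus where $h^0(L(D)) \ge 4$ is $V^{e-2}_e(M)$, of expected dimension $e - 6 < e - 2$; a parallel cycle-class computation shows that $[W_e] \cdot [W^3_g]$ vanishes (the class lies in cohomological degree greater than $2g$ once $e < 12$), so $V^{e-2}_e(M)$ is empty for general $L$ in our range, and in any case its dimension is strictly less than that of $\Veo(M)$. For $L(D)$ base point free: any base point $p$ of $L(D)$ would force $L(D)(-p) \in W^2_{g-1}(C)$, a subvariety of codimension $9$ in $\Pic^{g-1}(C)$. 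The map $\Veo(M) \times C \to \Pic^{g-1}(C)$, $(D, p) \mapsto L(D)(-p)$, has image of dimension at most $e - 1$, and a dimension and cycle-class comparison (entirely analogous to the previous step) shows this image meets $W^2_{g-1}$ only in a proper subvariety of $\Veo(M) \times C$. Hence a general $D$ satisfies both $h^0(L(D)) = 3$ and $L(D)$ base point free.

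The principal obstacle is the nonemptiness of $\Veo(M)$ in Step 2; the intersection-theoretic computation (or the citation of \cite{AS, Baj}) is the essential input. The remaining genericity statements, controlling $V^{e-2}_e(M)$ and the base-point locus, are then clean dimension comparisons.
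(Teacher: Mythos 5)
Your setup and your nonemptiness step are sound, and the latter is a legitimate variant of the paper's argument: you intersect cycle classes (the product $[W_e]\cdot[W^2_g]$ is a nonzero multiple of $\theta^{g-e+6}$ since $e\ge 6$), whereas the paper invokes the ampleness of $\Hom(\cE,\cF)$ for the degeneracy locus $W^2_g$ together with \cite[Proposition VII.1.3]{ACGH}. Either way one gets a $D$ with $h^0(L(D))\ge 3$ and hence $\dim \Veo(\Kc L^{-1})\ge e-2$.

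The genuine gaps are in the two genericity steps, where what is needed are \emph{upper} bounds on dimensions of secant loci; neither expected-dimension counts nor cycle classes can supply these, since determinantal loci are bounded below, not above, by their expected dimension. First, your claim that $V^{e-2}_e(\Kc L^{-1})$ is empty because $[W_e]\cdot[W^3_g]$ lives above degree $2g$ fails twice over: vanishing of a cup product never implies empty intersection, and moreover $e=\frac{1}{3}(g-1+\varepsilon)\ge 12$ as soon as $g\ge 34$, so in much of the stated range ($g\ge 18$ arbitrary) that class is actually nonzero and the locus is typically nonempty. Your fallback assertion that $\dim V^{e-2}_e(\Kc L^{-1})<\dim\Veo(\Kc L^{-1})$ is precisely what has to be proved; the paper does this via a Martens--Mumford-type statement, \cite[Lemma 4.1]{Baj}, asserting that no component of $\Veo(\Kc L^{-1})$ is contained in $V^{e-2}_e(\Kc L^{-1})$. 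Second, for base point freeness the same issue recurs: ``a dimension and cycle-class comparison'' cannot show that the bad locus is proper in $\Veo(\Kc L^{-1})$. The paper first reduces, using that $L$ itself is base point free, to divisors of the form $D=D'+x$ with $D'\in V^{e-2}_{e-1}(\Kc L^{-1})$, and then bounds the \emph{actual} dimension of $V^{e-2}_{e-1}(\Kc L^{-1})$ by its expected dimension $e-4$ using \cite[Theorem 3.2]{Baj}: the relevant multiplication map $\mu_0$ is injective because $e-1<d_1(C)$ forces $h^0(\Oc(D'))=1$. Some input of this kind --- controlling the secant varieties themselves rather than their classes --- is indispensable and is missing from your argument.
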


\begin{proof}
Consider the Abel--Jacobi-type map $\alpha \colon \Ce \to \Pic^g (C)$ given by $D \mapsto L (D)$. Now $L(D') \cong L(D)$ if and only if $\Oc (D') \cong \Oc (D)$. But $e \le \frac{g+2}{3} < \frac{g+2}{2} \le d_1 (C)$ since $C$ is general, and so $\Oc (D') \cong \Oc (D)$ if and only if $D' = D$. Thus $\alpha$ is injective, and its image is a closed sublocus of $\Pic^g (C)$ of dimension $e$.

Now $\beta_{1, g}^3 = g - 6$, so $W^2_g$ has codimension at most $6$ in $\Pic^g (C)$. 
 Moreover, $W^2_g$ is a degeneracy locus of a map $\cE \to \cF$ of vector bundles over $\Pic^g (C)$ where $\Hom (\cE, \cF)$ is ample, by \cite[VII.2]{ACGH}. Thus by \cite[Proposition VII.1.3]{ACGH} it intersects $\Image \alpha$ in codimension at most $6$. As $g \ge 18$, we have $e \ge 6$. Thus there exists a divisor $D \in \Ce$ such that $h^0 (L (D)) \ge 3$.

In particular, the secant locus $\Veo (\Kc L^{-1})$ defined in (\ref{VefLDefn}) is nonempty. By Riemann--Roch, $h^0 (\Kc L^{-1}) = e + 1$. Thus $\Veo (\Kc L^{-1})$ has dimension at least
\[
e - ( h^0 ( \Kc L^{-1} ) - e + 1 ) \ = \ e - ( e + 1 - e + 1 ) \ = \ e - 2 .
\]
By \cite[Lemma 4.1]{Baj} (which one can check remains valid without the hypothesis of very ampleness), no component of $\Veo (\Kc L^{-1})$ is contained in $V^{e-2}_e (\Kc L^{-1})$. Thus we may assume that $h^0 ( L (D) ) = 3$.

Next: Since $L$ is base point free, $L (D)$ fails to be base point free if and only if $D = D' + x$ for some $x \in C$ and $D' \in C_{e-1}$ with $h^0 ( L (D')) = 3$. By (\ref{VefLDefn}), such a $D'$ belongs to $V^{e-2}_{e-1} ( \Kc L^{-1} )$.

Now by \cite[Theorem 3.2]{Baj}, the locus $V^{e-2}_{e-1} ( \Kc L^{-1} )$ is smooth and of expected dimension at $D'$ if and only if the multiplication map
\[
\mu_0^{\Kc L^{-1}} \colon H^0 ( \Oc (D') ) \otimes H^0 ( \Kc L^{-1} (-D') ) \ \to \ H^0 ( \Kc L^{-1} )
\]
is injective. As $C$ is general,
\[
e - 1 \ = \ \frac{1}{3}( g - 4 + \varepsilon ) \ < \ \frac{g}{2} + 1 \ \le \ d_1 (C) ,
\]
whence $h^0 (\Oc (D')) = 1$. It follows easily that $\mu_0^{\Kc L^{-1}}$ is injective, and all components of $V^{e-2}_{e-1} (\Kc L^{-1})$ attain their expected dimension $(e-1) - (h^0 (\Kc L^{-1}) - (e - 1) + 1 ) = e - 4$. Therefore, the locus
\[
\left\{ D' + x : D' \in V^{e-2}_{e-1} (\Kc L^{-1}), x \in C \right\}
\]
has dimension at most $e - 3$. We deduce that if $D \in \Veo (\Kc L^{-1})$ is general, then $L(D)$ is base point free. This completes the proof of the proposition.
\end{proof}

\begin{remark}
Proposition \ref{PlaneCurveSing} implies that a general curve of genus $g \ge 18$ has a plane model of degree $g$ with a point of multiplicity at least $\frac{g}{3}$.
\end{remark}

Now we are in a position to construct a generated coherent system which is $\alpl$-stable but neither $\alps$- nor linearly semistable.

\begin{example} \label{NYN}
Let $C$, $L$ and $D$ be as above, and let $M$ be a general element of $\Pic^{g+1} (C)$. Set $S := L \oplus M$. As $h^0 (L(D) \oplus M) = h^0 (S) + 1$, the short exact sequence
\[
0 \ \to \ S \ \to \ L(D) \oplus M \ \to \ \cO_D \ \to \ 0
\]
defines an element of the generalized secant locus $\Qeo ( S^\vee , \Kc, H^0 (\Kc \otimes S^\vee) )$ defined in (\ref{GenSecRkR}). This is a determinantal sublocus of $\Quot^e ( S^\vee )$ of expected dimension
\[
\dim \Quot^e (S^\vee ) - (h^0 ( \Kc \otimes S^\vee ) - e + 1) 
\ = \ 2e - 2 .
\]
Let $\cQ$ be an irreducible component of $\Qeo \left( S^\vee , \Kc, H^0 (\Kc \otimes S^\vee) \right)$ containing the element $( L ( D ) \oplus M )^\vee$.

Now each element $\left[ F^\vee \subset S^\vee \right]$ of $\cQ$ defines a sequence
\begin{equation} \label{FExactSeq}
0 \ \to \ L \oplus M \ \to \ F \ \to \ \tau \ \to \ 0
\end{equation}
where $\tau$ is torsion of length $e$. We claim that for a general such $F$, the complete coherent system $(F, H^0 (F))$ is generated of type $(2, 2g+1, 5)$ and $\alpl$-stable, but neither $\alps$- nor linearly semistable. Abusing language, we shall refer to $F$ as an element of $\cQ$.

Firstly, $L(D) \oplus M$ is generated with $h^0 ( L(D) \oplus M ) = 5$ by Proposition \ref{PlaneCurveSing}. Since these properties are open in suitable families, the same is true for a general $F \in \cQ$. Let us show that $F$ is $\alpl$-stable. Now since $M$ is saturated in $L(D) \oplus M$, it is saturated inside a general $F \in \cQ$ since torsion freeness is an open condition by \cite[Proposition 2.1]{Mar}. For such an $F$, comparing determinants, we have a short exact sequence of locally free sheaves
\[
0 \ \to \ M \ \to \ F \ \to \ L (\Delta) \ \to \ 0
\]
where $\Delta \in \Ce$ and $h^0 (L (\Delta) ) = 3$ (so in fact $\Delta \in \Veo ( \Kc L^{-1} )$). As $F$ is generated, so is $L(\Delta)$. Thus if $N \subset F$ is a line subbundle with $h^0 (N) \ge 3$, then clearly $N \to F \to L(\Delta)$ is an isomorphism, and $F$ splits as $M \oplus L(\Delta)$. But the family
\[
\left\{ M \oplus L (\Delta) : \Delta \in \Ce \right\}
\]
is of dimension at most $e$, which is less than $2e - 2 \le \dim \cQ$ since $g \ge 18$. Thus a general $F \in \cQ$ has no line subbundle $N$ with $h^0 (N) \ge 3$, and is therefore $\alpl$-stable by Lemma \ref{SpecialImpl}.

For the rest: As
\[
\deg M \ = \ g + 1 \ > \ \frac{\deg L (D) + \deg M}{2} \ = \ \frac{2g + 1}{2} \ = \ \mu(F) ,
\]
the bundle $F$ is not semistable, and so $(F, H^0 (F))$ is not $\alps$-semistable by Lemma \ref{StVBImpliesAlps} (b). Finally, by (\ref{FExactSeq}), any $F \in \cQ$ contains $L$ as a generated subsheaf of degree $g - e > 0$. As $g - e < 2g + 1 = \deg F$, the quotient $F/L$ is not trivial. But
\begin{multline*}
\lambda (L) \ = \ \frac{\deg L}{h^0 (L) - \rank L} \ = \ g - e \ = \ g - \left( \frac{g - 1 + \varepsilon}{3} \right) \ = \\
 \frac{2g + 1 - \varepsilon}{5 - 2} \ < \ \frac{2g + 1}{h^0 (F) - \rank F} \ = \ \lambda (F) .
\end{multline*}
Thus $F$ is not linearly semistable.

In summary, if $F \in \cQ$ is general then $(F, H^0 (F))$ is a coherent system of type $(2, 2g + 1, 5)$ which is $\alpl$-stable, but not $\alps$- or linearly semistable.
\qed \end{example}

\begin{remark}
The secant locus $\Qeo (\Kc \otimes S^\vee)$ contains at least one component in addition to $\cQ$. Any elementary transformation of the form
\[
0 \ \to \ L \oplus M(x) \ \to \ F' \ \to \ \tau' \ \to 0 ,
\]
where $\tau'$ is torsion of degree $e - 1$, defines an element of $\Qeo ( \Kc \otimes S^\vee )$. The locus of such $F'$ has dimension
\[
\dim C + \dim \Quot^{e-1} ( L \oplus M (x) ) \ = \ 1 + 2(e-1) \ = \ \dim \Quot^e ( L \oplus M ) - 1,
\]
which is in fact greater than the expected dimension of $\Qeo ( \Kc \otimes S^\vee )$. If $h^0 (F') = 5$ then, since $h^0 (M(x)) = 3$, by Lemma \ref{SpecialImpl} the system $(F', H^0 (F'))$ is not $\alpl$-stable. However, this locus does not contain $\cQ$ because we have seen that $M$ is saturated inside a general $F \in \cQ$.
\end{remark}

\section{Linearly stable coherent systems} \label{LinStExamples}

The remaining coherent systems which we shall construct are linearly stable. By (\ref{LinStNec}), this is a necessary condition for stability of the DSB. We shall see that all the examples in this section in fact have stable DSB; indeed, this is how we shall prove linear stability in two cases. We continue to use the extension construction of Lemma \ref{mfExt}.

\subsection{A system which is not \texorpdfstring{$\alps$}{alpha\_S}- or \texorpdfstring{$\alpl$}{alpha\_L}-semistable}

We begin by exhibiting a system which is not $\alpha$-semistable for any $\alpha \ge 0$, but which is linearly stable.

\begin{example} \label{NNY}
Let $C$ be a curve of genus $g \ge 4$ which is general in moduli. Then we may choose an integer $\ell$ satisfying
\begin{equation} \label{NNYineq}
\frac{g}{2} + 1 \ < \ \ell \ < \ \frac{2g}{3} + \frac{3}{2} .
\end{equation}
As $d_1 (C) = \left\lceil \frac{g}{2} + 1 \right\rceil$, by (\ref{NNYineq}) the Brill--Noether number $\beta^2_{1, d}$ is positive for $d \ge \ell$. Let $L_1$ and $L_2$ be general elements of $W^1_{\ell + 1}$ and $W^1_\ell$ respectively. By (\ref{NNYineq}) and since $g \ge 4$, one checks that in particular $\ell + 1 \le g + 1$. 
 Thus we may assume that $h^0 (L_1) = h^0 (L_2) = 2$. Furthermore, by Lemma \ref{BasePointFree}, we may assume that $L_1$ and $L_2$ are generated.

We consider now extensions $0 \to L_1 \to E \to L_2 \to 0$ as in Lemma \ref{mfExt}. Condition (\ref{AllSectsLift}) here is
\[
\ell \ > \ 2 \cdot 2 + ( 2-1 ) \cdot (g - 1 - (\ell + 1)) ;
\]
that is, $\ell > \frac{g}{2} + 1$, which follows from (\ref{NNYineq}). By Lemma \ref{mfExt}, there exists a nontrivial extension $E$ as above such that $(E, H^0 (E))$ is generated of type $(2, 2\ell + 1, 4)$.

Now for any $\alpha \ge 0$, we have
\[
\mu_\alpha ( L_1 ) \ = \ (\ell + 1) + \alpha \cdot 2 \ > \ \frac{2 \ell + 1}{2} + \alpha \cdot \frac{4}{2} \ = \ \mu_\alpha (E) .
\]
In particular, the subsystem $(L_1, H^0 (L_1))$ is both $\alps$- and $\alpl$-desemistabilizing. We claim that $(E, H^0 (E))$ is nonetheless linearly stable. Any line subbundle $N \subset E$ with $h^0 (N) \ge 2$ is either $L_1$ or lifts from a subsheaf of $L_2$. But since $L_2$ is generated and the extension is nontrivial, the latter cannot happen and we must have $N = L_1$. But
\[
\lambda (L_1) \ = \ \ell + 1 \ > \ \frac{2 \ell + 1}{2} \ = \ \lambda (E) .
\]
It follows that $L_1$ is not linearly destabilized by a line subbundle.

Suppose now that $F \subset E$ is a generated subsheaf of rank two with $h^0 (F) = 3$. If $L_1 \subset F$ then it is easy to see that $F$ is an extension $0 \to L_1 \to F \to \Oc \to 0$. In this case,
\[
\lambda(F) \ = \ \frac{\ell + 1}{3 - 2} \ = \ell + 1 \ > \ \lambda (E) .
\]
If $L_1 \not\subset F$ then, by the last paragraph, $h^0 (N) \le 1$ for every line subbundle $N \subset F$. Therefore, $\deg F \ge d_2$ by \cite[Lemma 4.4]{LNe}, and by (\ref{NNYineq}) we obtain
\[
\lambda (F) \ \ge \ d_2 (C) \ \ge \ \frac{2g}{3} + 2 \ > \ \ell + \frac{1}{2} \ = \ \lambda(E) .
\]
Thus $F$ does not linearly destabilize $E$. We conclude that $E$ is linearly stable. \qed
\end{example}

\begin{remark} \label{NotAlphaButLin}
Let us prove that with $E$ as in Example \ref{NNY}, in fact $M_E$ is stable (cf.\ \ref{LinStNec}). Since $C$ is assumed to be general in moduli, we have
\[
d_3 (C) \ = \ \left\lceil \frac{3g}{4} + 3 \right\rceil .
\]
Thus, as $\ell < \frac{2g}{3} + \frac{3}{2}$ by (\ref{NNYineq}), in particular $\ell < d_3 - 1$ and $\deg E = 2 \ell + 1 < 2 \cdot d_3 (C)$. As $(E, H^0 (E))$ is linearly stable and by Lemma \ref{mfExt} the bundle $E$ has no trivial quotient, one checks using \cite[Proposition 5.10]{CHL} the DSB $M_E$ is a stable vector bundle.

In particular, a system which is not $\alpha$-semistable for any $\alpha$ may have stable DSB.
\end{remark}

\subsection{Stability of DSB for type \texorpdfstring{$(2, d, 5)$}{(2, d, 5)}}

In order to show linear stability of our last two examples, we shall in fact prove a slightly more general statement on stability of the DSB of generated coherent systems of type $(2, d, 5)$ for low values of $d$. In addition to completing the list of examples in (\ref{Overview}), this will allow us to prove Theorem \ref{MainD}.

\begin{proposition} \label{2d5}
Let $C$ be a curve which is general in the sense of Theorem \ref{MercatRankTwo}; that is, satisfying $\gamma_2' (C) = d_1 (C) - 2$. Let $d$ be an integer satisfying $d < 3 \cdot d_1 (C)$. Suppose that $(E, V)$ is a generated coherent system of type $(2, d, 5)$ which is $\alpl$-stable. Then $\mev$ is a stable vector bundle.
\end{proposition}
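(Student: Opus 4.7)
The plan is to assume for contradiction that $\mev$ is not stable and to analyze a destabilizing locally free quotient $Q$ of $\mevd$. Such a quotient has rank $t \in \{1, 2\}$ and slope $\mu(Q) \le \mu(\mevd) = d/3 < d_1 (C)$ by hypothesis. Since $V^\vee \otimes \Oc \twoheadrightarrow \mevd \twoheadrightarrow Q$, the sheaf $Q$ is generated. Moreover, the injection $V \hookrightarrow H^0 (E)$ yields $H^0 (\mev) = 0$, so $\mev$ admits no nonzero sections.

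When $t = 1$, the generated line bundle $Q$ has degree less than $d_1 (C)$. Since any generated line bundle is either trivial or carries a pencil (hence has degree at least $d_1 (C)$), we must have $Q \cong \Oc$. But then $\Oc \hookrightarrow \mev$, contradicting $H^0 (\mev) = 0$.

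Suppose $t = 2$. Replacing $Q$ by its minimum-slope rank-one quotient if necessary, we may assume $Q$ is semistable. If $h^0 (Q) \ge 4$, then Theorem \ref{MercatRankTwo} is applicable (noting $\mu(Q) < d_1(C) \le g - 1$) and gives $\mu(Q) \ge h^0 (Q) + d_1 (C) - 4 \ge d_1 (C)$, contradicting $\mu(Q) < d_1 (C)$. If $h^0 (Q) = 2$, then the surjection from any two generating sections forces $Q \cong \Oc^2$, and once again we get two sections of $\mev$, a contradiction.

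The principal difficulty is the remaining case $h^0 (Q) = 3$, where the hypothesis of $\alpl$-stability enters through Lemma \ref{SpecialImpl}. The plan is to recover a net inside $(E, V)$ from the data of $Q$. Writing the evaluation sequence $0 \to \Oc (-D) \to H^0 (Q) \otimes \Oc \to Q \to 0$ and dualizing yields
\[
0 \ \to \ N := Q^\vee \ \to \ W \otimes \Oc \ \to \ \Oc (D) \ \to \ 0 ,
\]
where $W := H^0 (Q)^\vee$. The surjection $V^\vee \otimes \Oc \twoheadrightarrow Q$ induces a map $V^\vee \to H^0 (Q)$ whose image generates $Q$; any generating subspace for the rank two bundle $Q$ has dimension at least three (dimension two would force $Q \cong \Oc^2$), so this map is surjective. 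Dualizing embeds $W$ as a three-dimensional subspace of $V$. A diagram chase then shows that $N = \mev \cap (W \otimes \Oc)$ inside $V \otimes \Oc$, so the natural map $W \otimes \Oc \to E$ factors through an inclusion $L_0 := \Oc (D) \hookrightarrow E$. The injectivity of $V \hookrightarrow H^0 (E)$ forces the induced map $W \to H^0 (L_0)$ to be injective, so $(L_0, W)$ is a coherent subsystem of $(E, V)$ of type $(1, \deg D, 3)$, that is, a net. This contradicts Lemma \ref{SpecialImpl} and completes the proof.
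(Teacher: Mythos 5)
Your proof is correct and follows essentially the same strategy as the paper's: analyze a rank one or rank two quotient of $\mevd$ (the paper phrases this via the Butler diagram of a maximal-slope subbundle $S \subset \mev$), use generation to bound its number of sections, dispose of the rank one and $h^0 \ge 4$ cases by the gonality bound and Theorem \ref{MercatRankTwo} respectively, and in the critical $h^0 = 3$ case extract a net contradicting $\alpl$-stability via Lemma \ref{SpecialImpl}. Your diagram chase recovering the subsystem $(L_0, W)$ is a correct dualized version of the paper's identification $F_S / N \cong E_W$, and your explicit treatment of the trivial quotients $\Oc$ and $\Oc^{\oplus 2}$ via $H^0(\mev) = 0$ fills in details the paper leaves implicit.
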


\begin{proof}
Let $S \subset \mev$ be a subbundle of maximal slope. Following \cite{CT}, we form the \emph{Butler diagram of $E$ by $S$}:
\[ \xymatrix{
 & 0 \ar[d] & 0 \ar[d] & N \ar[d] & \\
0 \ar[r] & S \ar[r] \ar[d] & \Oc \otimes W \ar[r] \ar[d] & F_S \ar[d] \ar[r] & 0 \\
0 \ar[r] & \mev \ar[r] & \Oc \otimes V \ar[r] & E \ar[r] & 0
} \]
Here $W$ is defined by
\[
W^\vee \ = \ \Image \left( V^\vee \ \to \ H^0 (M_E^\vee) \to H^0 (S^\vee) \right) .
\]
As $M_E^\vee$ is generated by $V^\vee$, also $S^\vee$ is generated by $W^\vee$. Note that $F_S / N \cong E_W$, the subsheaf of $E$ generated by $W$, and that $W \hookrightarrow V$ by the proof of \cite[Lemma 1.9]{But94}.

Firstly, if $S$ is a line bundle then $h^0 ( S^\vee ) \ge \dim W \ge 2$, and so $\deg (S^\vee) \ge d_1 (C)$. As $d < 3 \cdot d_1 (C)$ by hypothesis, $\mu (S) < \mu (\mev)$ and $S$ does not destabilize $\mev$.

If $S$ has rank $2$ then clearly $\dim W \ge 3$. If $\dim W = 3$ then $(E_W , W)$ is a subsystem of type $(1, e, 3)$. But then $(E, V)$ contains a net, which is impossible by Lemma \ref{SpecialImpl} since $(E, V)$ is assumed $\alpl$-stable.

Suppose, then, that $\dim W \ge 4$. As $S$ is assumed to have maximal slope, it is semistable. If $S^\vee$ contributes to $\gamma_2 (C)$ then, by Theorem \ref{MercatRankTwo} and our generality assumptions,
\[
\mu (S^\vee) - h^0 (S^\vee) + 2 \ \ge \ \gamma_2 (C) \ = \ \gamma_1 (C) \ = \ d_1 (C) - 2 ;
\]
whence $\mu (S) \le -d_1 (C) < -\frac{d}{3} = \mu (\mev)$ and $S$ does not destabilize $\mev$. If $S^\vee$ does not contribute to $\gamma_2 (C)$, then since $S^\vee$ is semistable and $h^0 (S^\vee) \ge 4$, we must have
\[
\mu (S^\vee) \ \ge \ g-1 \ > \ \left\lceil \frac{g}{2} + 1 \right\rceil \ \ge \ d_1 (C) > \frac{d}{3} \ = \ \mu (\mevd)
\]
and $S$ does not destabilize $\mev$. We conclude that $\mev$ is a stable vector bundle.
\end{proof}

\begin{remark}
The coherent systems whose existence is shown in Example \ref{NYN} are generated and of type $(2, 2g+1, 5)$ and $\alpl$-stable. As they are not linearly semistable, their DSBs are not semistable. However, there is no contradiction to Proposition \ref{2d5}, as $2g + 1 \ge \frac{3g}{2} + 3 \ge 3 d_1$ for $g \ge 4$.
\end{remark}

\subsection{Systems which are \texorpdfstring{$\alpl$}{alpha\_L}-stable}

Our penultimate example is of a system which is $\alpl$- and linearly stable, but not $\alps$-stable.

\begin{example} \label{NYY}
Let $C$ be a curve of genus $g \ge 25$ which is general in moduli. As previously, we check that there exists an integer $\ell$ satisfying
\begin{equation} \label{NYYineq}
\frac{3g}{4} + 1 \ > \ \ell \ \ge \ d_2 \ = \ \left\lceil \frac{2g}{3} + 2 \right\rceil .
\end{equation}
For such an $\ell$, in particular $\dim W^2_\ell \ge 0$. By Lemma \ref{BasePointFree}, we may choose generated elements $L_1 \in W^1_{\ell + 1}$ and $L_2 \in W^2_\ell$ satisfying $h^0 (L_1) = 2$ and $h^0 (L_2) = 3$. We set
\[
k_1 \ = \ 2 \ \quad \hbox{and} \quad k_2 \ = \ 3 , \quad \ \hbox{and} \quad \ell_1 \ = \ \ell + 1 \quad \hbox{and} \quad \ell_2 \ = \ \ell .
\]
Then the inequality (\ref{AllSectsLift}) becomes
\[
\ell \ > \ 2 \cdot 3 + (3 - 1) ( g - 1 - (\ell + 1) ) ;
\]
that is, $\ell > \frac{2g + 2}{3}$, which follows from (\ref{NYYineq}).
 As before, using Lemma \ref{mfExt} we obtain a nontrivial extension $0 \to L_1 \to E \to L_2 \to 0$ defining a generated coherent system of type $(2, 2 \ell + 1, 5)$.

Firstly, as $\deg L_1 = \ell + 1 > \frac{2\ell + 1}{2} = \mu (E)$, the bundle $E$ is not semistable. Thus $(E, H^0 (E))$ is not $\alps$-semistable by Lemma \ref{StVBImpliesAlps} (b).

Let now $N \subset E$ be any line subbundle. If $h^0 (N) \ge 3$ then $N$ lifts from a subsheaf of $L_2$. As $L_2$ is generated, in fact $N \cong L_2$; but this is impossible since the extension is nontrivial. Therefore $h^0 (N) \le 2$. In particular, $(E, H^0(E))$ does not contain a net, and thus is $\alpl$-stable by Lemma \ref{SpecialImpl}.

 As for linear stability: By (\ref{NYYineq}), we have
\[
\deg E \ = \ 2 \ell + 1 \ < \ 2 \cdot \left( \frac{3g}{4} + 1 \right) + 1 \ = \ \frac{3g}{2} + 3 \ = \\
3 \cdot \left( \frac{g}{2} + 1 \right) \ \le \ 3 \cdot d_1 (C) .
\]
As we have seen that $(E, H^0 (E))$ is $\alpl$-stable, by Proposition \ref{2d5} the DSB $M_E$ is a stable vector bundle, and $(E, H^0 (E))$ is linearly stable by (\ref{LinStNec}). \qed
\end{example}

Our final example is a generated coherent system which is stable in all three senses.

\begin{example}\label{YYY}
Let $C$ be a general curve of genus $g \ge 25$, and let $\ell$ be an integer satisfying
\begin{equation} \label{YYYineq}
\frac{3g}{4} + 1 \ > \ \ell \ \ge \ d_2 \ = \ \left\lceil \frac{2g}{3} + 2 \right\rceil.
\end{equation}
By Lemma \ref{BasePointFree}, we may choose generated $L_1 \in W^1_\ell$ and $L_2 \in W^2_{\ell + 1}$ with $h^0 (L_1) = 2$ and $h^0 (L_2) = 3$. One checks as before that by (\ref{YYYineq}) and Lemma \ref{mfExt} there exists a nontrivial extension $0 \to L_1 \to E \to L_2 \to 0$ defining a generated coherent system of type $(2, 2 \ell + 1, 5)$.

\begin{proof}[Proof of Theorem \ref{MainD}]
Throughout this proof, we assume that $d \in \left\{ 2 d_2 (C) - 1 , 2 d_2 (C) \right\}$. To ease notation, we abbreviate $d_2 (C)$ to $d_2$.

Firstly, we show that $S_0 (2, d, 5)$ is nonempty. As $C$ is general, we may choose generated line bundles $L_1 \in W_{d_2 - 1}^1$ and $L_2 \in W^2_{d_2 + \varepsilon}$, where $\varepsilon \in \{ 0, 1 \}$. Following the notation of Lemma \ref{mfExt}, we set
\[ \ell_1 = d_2 - 1 , \quad \ell_2 = d_2 + \varepsilon , \quad k_1 = 2 \quad \hbox{and} \quad k_2 = 3 . \]
The inequality (\ref{AllSectsLift}) is then $d_2 + \varepsilon \ > \ 2 \cdot 3 + (3 - 1) \cdot (g - 1 - d_2 + 1)$, which becomes
Thus by Proposition \ref{2d5}, if $(E, V)$ is any $\alps$-stable element of $\cS$ then $\mev$ is a stable vector bundle, and then $(\mevd, V^\vee)$ is $\alpha$-stable for $\alpha$ close to zero by Lemma \ref{StVBImpliesAlps} (a). Thus the map $\cD \colon S_0 (2, d, 5) \dashrightarrow S_0 (3, d, 5)$ given by $(E, V) \mapsto (\mevd, V^\vee)$ is defined on all components of $S_0 (2, d, 5)$.
Since
\[
\left( M_{\mevd, V^\vee}, \left( V^\vee \right)^\vee \right) \ \cong \ (E, V) ,
\]
the dual span construction also gives a birational inverse for $\cD$ on all components intersecting the image of $\cD$. Therefore, to complete the proof we must check that $\cD$ is dominant. It will suffice to show that if $(E_1, V_1)$ is a generic element of any component of $S_0 (3, d, 5)$, then $(\mevod, V_1^\vee)$ is $\alps$-stable.

Let $(E_1, V_1)$ be an $\alps$-stable element of $S_0 (3, d, 5)$. Let $S \subset \mevo$ be a maximal line subbundle. We form the Butler diagram of $( E_1 , V_1 )$ by $S$ as before:
\[ \xymatrix{
 & & & N \ar[d] \\
0 \ar[r] & S \ar[r] \ar[d] & \Oc \otimes W \ar[r] \ar[d] & F_S \ar[d] \\
0 \ar[r] & \mevo \ar[r] & \Oc \otimes V_1 \ar[r] & E_1 ,
} \]
where again $W$ is defined by $W^\vee = \Image \left( V_1^\vee \to H^0 ( \mevod ) \to H^0 ( S^\vee ) \right)$. Firstly, suppose that $\dim W = 2$. Then $N = 0$ and $F_S \cong S^\vee$. As $h^0 ( S^\vee ) \ge 2$, we have
\[
\mu (S^\vee) \ \ge \ d_1 (C) \ \ge \ \frac{g}{2} + 1 .
\]
On the other hand,
\[
\mu (E_1) \ \le \ \frac{2 d_2}{3} \ < \ \frac{2}{3} \cdot \left( \frac{2g}{3} + 3 \right) \ = \ \frac{4g}{9} + 2 \ \le \ \frac{g}{2} + 1 ;
\]
the last inequality since $g \ge 18$. As $E_1$ is a semistable vector bundle by Lemma \ref{StVBImpliesAlps} (c), no such $S^\vee$ and thus no such $S$ can exist.

If $\dim W \ge 3$, then $h^0 ( S^\vee ) \ge 3$ and
\[
\deg S \ \le \ -d_2 \ \le \ - \left( \frac{2d_2 - \varepsilon}{2} \right) \ = \ \mu \left( \mevo \right) .
\]
If inequality is strict, then $(\mevod , V_1^\vee)$ is $\alps$-stable by Lemma \ref{StVBImpliesAlps} (a) (after reducing $\alps$ if necessary) and we are done. If equality obtains, then $d = 2 d_2$ and $\deg S = d_2$. In this case, $\mevod$ is an extension
\[
0 \ \to \ T \ \to \ \mevod \ \to \ S^\vee \ \to \ 0
\]
where $T$ is a line bundle of degree $d_2$ with $\dim (V_1 \cap H^0 (T)) = 2$. Now if this extension were trivial, then we would obtain a direct sum decomposition
\[
(\mevod, V_1^\vee) \ \cong \ \left(T, H^0(T) \cap V_1 \right) \oplus \left( S^\vee, H^0 (S^\vee) \right) .
\]
But then $E_1$ would be a direct sum $M_{S^\vee}^\vee \oplus M_{T, H^0 (T) \cap V_1}^\vee$, which is clearly not semistable; and then $(E_1, V_1)$ would fail to be $\alps$-stable by Lemma \ref{StVBImpliesAlps} (c). Thus $\mevod$ is a nontrivial extension, and in particular $( \mevod, V_1^\vee )$ has no subsystem of type $(1, e, 3)$. Thus if $(N, U)$ is any rank one subsystem, for $\alpha > 0$ we have
\[
\mu_\alpha (N, U) \ \le \ d_2 + 2 \alpha \ < \ d_2 + \alpha \cdot \frac{5}{2} \ = \ \mu_\alpha \left( \mevod, V_1^\vee \right) .
\]
It follows that $\left( \mevod, V_1^\vee \right)$ is $\alps$-stable, as desired.

We conclude that there is no component of $S_0 (3, d, 5)$ where the dual span construction does not produce $\alps$-stable systems. This completes the proof that Butler's conjecture holds nontrivially for $d \in \{ 2 d_2 - 1, 2 d_2 \}$.
\end{proof}

\end{document}